	\def\ps@pprintTitle{%
		\let\@oddhead\@empty
		\let\@evenhead\@empty
		\def\@oddfoot{\centerline{\thepage}}%
		\let\@evenfoot\@oddfoot}
\DeclarePairedDelimiter{\norm}{\lVert}{\rVert}
\newtheorem{lemma}{Lemma}[section]
\newtheorem{theorem}[lemma]{Theorem}
\theoremstyle{remark}
\newtheorem{remark}[lemma]{Remark}
\theoremstyle{definition}
\newcommand{\mtta}{\mathrm{MTTA}}
\begin{document}
	
	\begin{frontmatter}
		\author[ISTI]{Giulio Masetti}
		\ead{giulio.masetti@isti.cnr.it}
		
		\author[DM,ISTI]{Leonardo Robol\corref{cor1}\fnref{gncs}}
		\ead{leonardo.robol@unipi.it}
		
		\address[ISTI]{Institute of Information Science and
			Technologies ``A. Faedo'', ISTI-CNR, Pisa, Italy.}
		\address[DM]{Dipartimento di Matematica, Universit\`a di Pisa, Italy.}
		
		\cortext[cor1]{Corresponding author}
		
		\fntext[gncs]{The research of the author was partially supported by the INdAM/GNCS project ``Metodi di proiezione per equazioni di matrici e sistemi lineari con operatori definiti tramite somme di prodotti di Kronecker, e soluzioni con struttura di rango''. The author is a member of the INdAM Research group GNCS.}
		
		\title{Tensor methods for the computation of MTTA in large
systems of loosely interconnected components}		
		
\begin{abstract}
We are concerned with
the computation of the mean-time-to-absorption (MTTA) for 
a large system of loosely interconnected components, modeled 
as  continuous time Markov chains. 
In particular, we show that splitting the local and synchronization
transitions of the smaller subsystems allows to formulate an
algorithm for the computation of the MTTA which is proven to
be linearly convergent.  Then, we show how to modify the method
to make it quadratically convergent, thus overcoming the difficulties
for problems with convergent rate close to $1$. 

In addition, it is shown that this decoupling of local and synchronization transitions allows to easily represent all the matrices and vectors
involved in the method in the tensor-train (TT) format --- and 
we provide numerical evidence showing that this allows to treat
large problems with up to billions of states --- which would otherwise
be unfeasible. 
\end{abstract}
		
		\begin{keyword}    
			Tensor trains, Kronecker structure, Reliability, Mean-time-to-absorption, Mean-time-to-failure. 
			\MSC[2010] 15A60, 15A69, 60J22, 65F10, 65F60
		\end{keyword}
	\end{frontmatter}

\section{Introduction}

Model-based analysis of large and complex systems is 
considered of fundamental importance to tackle 
the increasing complexity of our society; at the same time, 
it is challenging,
because of the great number of technical issues that need to be 
overcome in order to accomplish the task.

On the most prolific areas in stochastic modeling is 
represented by Markov chains  
because of the trade-off
between representativeness and availability of solution techniques
that offers to the modeling community.
Nevertheless, when the number of interacting components 
in a system increases, even solution techniques that
are known to scale well can suffer for the largeness problem.
Of particular interest for this paper is the case when the modeler 
is asked to deal with reward structures on
Continuous Time Markov Chains (CTMCs)~\cite{TB17},
where the model is represented by an infinitesimal generator 
matrix $Q$, an initial probability vector $\pi_0$ and
a reward vector $r$. Standard approaches to tackle model largeness,
e.g., lumping and symbolic representation~\cite{D93} of $Q$, might not
be sufficient to address extremely large system models.
The symbolic representation of $Q$ by itself can in fact reduce 
the storage of information related to the chain, 
but cannot reduce the memory
footprint of the vectors involved in the computations. 

An example can be obtained by combining smaller subsystems into a 
larger one; if we suppose to consider $k$ components with only two possible
states (for instance, working and failed), which are combined in a single
Markov chain, the state space could be as large as $2^k$, including all
the possible combination of states in the components. For instance, 
if $k = 50$, we might have up to 
$2^{50}$ chain states; storing every single vector,
whose entry are assumed to occupy $8$ bytes, would require
more than $8$ Petabyte, making unfeasible
to store it in RAM. 

To overcome this problem, one may consider 
a clever use of a combination of RAM
and disk storage or considering approximations of the 
chain~\cite{BK04}; nevertheless, the gain is still 
not completely satisfactory, and indeed this
is considered one of the main obstacle to
the scalability of analytical methods for the evaluation
of CTMC properties.
  
Thus, a symbolic representation also of the vectors involved in
the computations can be considered a break-trough for the
modeling community to be able to analyze chains with huge state space.
Two recent papers have pursued this direction for
\emph{irreducible chains}, where the steady-state probability
vector $\pi$ is computed. In \cite{kressner2014low}, this is
achieved exploiting 
tensor trains~\cite{oseledets2011tensor}, whereas 
in \cite{BDKO17} the authors use Hierarchical Tucker Decomposition~\cite{grasedyck2010hierarchical,kressner2014htucker}.

The focus of this paper, instead, is on performance, dependability and 
performability properties of CTMC with \emph{absorbing states}. A new
symbolic representation of both matrices and vectors is proposed
to enable the assessment of huge models.
In particular, a first step forward with respect to the available
numerical techniques will be detailed for the case of
the Mean Time To Absorption (MTTA) evaluation in the context 
of reliability modeling~\cite{TB17}, where the CTMC has an unique
absorbing state and we are interested in computing the $\mtta$ as
a cumulative reward measure.
This measure sheds light on limit behaviors of the chain\footnote{the $\mtta$ is the limit of the integral of a function of the probability vector $\pi(t)$, for $t\rightarrow\infty$, as detailed in~\Cref{eq:defMTTA}.} but does not describe a steady-state
property of the chain.

The new method is presented formally and applied to a simple but
representative case study, where the technique is proved to be
feasible for CTMC with up to $3^{32} \approx 10^{15}$ 
(potential) states, and is supposed to scale even further. Storing
a vector of double floating point numbers of this length would 
require more than $13$ PetaByte of memory. 
A recent application of the method we are going to present can be found in~\cite{MR19}.
The paper is structured as follows:
In~\Cref{sec:SAN} notation for the Stochastic Automata Network (SAN)
formalism will be recalled.
\Cref{sec:lowRank} introduces the new low-rank representation for 
both the matrices and vectors.
\Cref{sec:MTTA} specializes the new general method to
the evaluation of the $\mtta$, providing all the details of the
mathematical steps.
In~\Cref{sec:remarks} very important computational remarks are
discussed: the feasibility of the method strongly relies on the
application of few key steps, detailed in this section.
\Cref{sec:caseStudy} presents the case study and
in~\Cref{sec:results} numerical results are show the feasibility 
of the method.
Finally, in~\Cref{sec:conclusions} conclusions are drawn.

\section{Stochastic Automata Networks}
\label{sec:SAN}

As described in~\cite{BK04}, it is possible to address the study of
large CTMC defining symbolically 
the infinitesimal generator matrix $Q$,
thus avoiding a complete state-space exploration. 
In particular, the Stochastic Automata Network (SAN) formalism~\cite{Plateau2000} 
allows to represent the CTMC as a set of stochastic automata
$M_1,\dots,M_k$, each having a (small) reachable set of states $\mathcal{RS}_i$,
where transitions are of two kinds: \emph{local} and
of \emph{synchronization}.
Transitions $t$ that are local to $M_i$, written $t\in\mathcal{LT}_i$, 
have impact only on $\mathcal{RS}_i$
and indicate the switch from a state $s\in\mathcal{RS}_i$ to
a state $s^{\prime}\in\mathcal{RS}_i$, in the following written
$s\overset{t}{\rightarrow}s^{\prime}$.
Synchronization transitions $t\in\mathcal{ST}$, instead, can appear in
more than one automaton.
In particular, if $t\in M_{i_1}$, $t\in M_{i_2}$, \dots, 
and $t\in M_{i_h}$ then
$s_{i_j}\overset{t}{\rightarrow}s^{\prime}_{i_j}$ can fire only if
the automaton $M_{i_1}$ is in state $s_{i_1}$, and 
the automaton $M_{i_2}$ is in state $s_{i_2}$, \dots,
and the automaton $M_{i_h}$ is in state $s_{i_h}$, at the same time.    
The overall CTMC is then the orchestration of local stochastic automata
where the director is represented by $\mathcal{ST}$.
The infinitesimal generator matrix $Q$ is not assembled explicitly, 
and its compressed
representation is called \emph{descriptor matrix} and is
formally defined by 
\begin{equation}
\label{eq:descriptorQ}
Q = R + W + \Delta,
\end{equation} 
i.e., the sum of local contributions, called $R$, and
synchronization contributions, called $W$, where 
\begin{align}
R &= \bigoplus_{i=1}^{k} R^{(i)}, &
W &= \sum_{t_j\in\mathcal{ST}} \bigotimes_{i=1}^{k} W^{(t_j,i)},
\end{align}
$R^{(i)}$ and $W^{(t_j,i)}$ are
$|\mathcal{RS}^{(i)}|\times|\mathcal{RS}^{(i)}|$ matrices,
and the diagonal matrix $\Delta$ is defined as
$\Delta = -\text{diag}\left( (R+W) e \right)$ where
$e$ is the vector with all the entries equal to $1$. The 
operator $\oplus$ is the \emph{Kronecker sum}, as 
formally described in Section~\ref{sec:kroneckersum}.
The matrices $R^{(i)}$ and $W^{(t_j,i)}$ are assembled exploring
$\mathcal{RS}^{(i)}$ and can be specified through an high level
formalism such as GSPN~\cite{D93,CM99} or PEPA~\cite{H96}.
In particular, $W^{(t_j,i)} = \lambda_{t_j} \tilde W^{(t_j,i)}$
where $\tilde W^{(t_j,i)}$ is a $\{0,1\}$-matrix defined as follows:
\begin{equation}
\tilde W^{(t_j, i)}_{s_i, s_i'} =
\begin{cases}
1 & \text{if } t_j \text{ is enabled in } s_i \text{ inside } M_i \text{ and } s_i\overset{t_j}{\rightarrow}s_i^{\prime} \\ 
0 & \text{otherwise} \\
\end{cases}
\end{equation}
where $\lambda_{t_j}$ is the constant rate associated with $t_j$, 
equal in every $M_i$. In particular, if the transition $t_j$ has 
no effect on the component $M_i$, we have $\tilde W^{(t_j, i)} = I$.
In the following we will call 
\[
\mathcal{PS} = \mathcal{RS}^{(1)}\times \cdots \times \mathcal{RS}^{(k)}
\]
the \emph{potential state space} and the $|\mathcal{PS}|\times |\mathcal{PS}|$
descriptor matrix $Q$ will be treated implicitly. 

\begin{remark}
	In general, the set of reachable states given an initial
	probability distribution $\pi_0$, might be a strict subset
	of $\mathcal{PS}$. Some techniques exploit the fact that
	the reachable state is smaller to achieve a higher efficiency. However, 
	this hides the tensorized structure of 
	$Q$ and of the probability vectors to be computed; 
	therefore, in this work the focus will be on $\mathcal{PS}$. 
\end{remark}

\section{Low-rank tensors}
\label{sec:lowRank}

To overcome the exponential explosion of memory requirements, 
sometimes called \emph{curse-of-dimensionality} \cite{oseledets2011tensor}, 
there has been a recent trend in exploiting the structure of $Q$, 
which can be recognized from \Cref{eq:descriptorQ}, in the setting where
the model describes the interaction of loosely interconnected component; in fact, in this case $Q$ can
be efficiently stored by only memorizing factors of Kronecker products \cite{BK04}. This
enables a reduction in storage and an acceleration of the
matrix-vector operator required in the development of 
most of the algorithms for computing steady-state probabilities
and performance and reliability measures. 

However, with the exponential growth of the state space which can
happen when combining several systems, even storing vectors
with as many components as the cardinality of the (potential) state space
can quickly become unfeasible. For this reason, there has been recently
a shift in developing ``symbolic'' representations for the
vectors under consideration as well. This turns out to require
considerably more effort. Recent promising developments leverage
the use of low-rank tensor formats, namely hierarchical Tucker
decompositions \cite{BDKO17}, and Tensor Trains \cite{kressner2014low} (in this work
considered for the steady-state analysis).

For the sake of self-completeness, we briefly review the theory of low-rank
tensor operators and Tensor Trains, 
that will be the building block for the compressed representation
proposed in this work. We refer the interested reader to \cite{oseledets2011tensor} 
for further details.  

\subsection{Kronecker sums} \label{sec:kroneckersum}

As we discussed in Section~\ref{sec:MTTA}, there are some kind of structures
that appear in the definition of the infinitesimal generator $Q$. In particular, 
we may define the \emph{Kronecker sum} $\mathcal A := A_1 \oplus \ldots \oplus A_k$ as follows:
\[
    \mathcal A = A_1 \otimes I \otimes \ldots \otimes I + 
    I \otimes A_2 \otimes I \otimes \ldots \otimes I + 
    \ldots + I \otimes \ldots \otimes I \otimes A_k. 
\]
Similarly structured matrix arise in other contexts as well, such as the discretization of high-dimensional PDEs, and solution of matrix and
tensor equations. 

 Equation~\ref{eq:descriptorQ} shows that this is the form
 of the $R$ matrix in the definition of the infinitesimal generator $Q$ of
the Markov chains under consideration. The matrix $Q$ is then obtained 
adding $W$, which models the weak interaction between the components, 
and has a similar structure. Even if $Q$ is not exactly in the
form of a Kronecker sum, but it has a low-rank tensorial structure. 

The term ``tensor rank'' does not have a single universally
accepted meaning. In fact, unlike in the matrix case (which
is obtained by setting $k = 2$), 
several different ranks can be defined --- and they have different
computational properties. The most classical definition is the
CP rank, linked to the Canonical Polyadic Decomposition (also sometimes
called PARAFAC --- see \cite{kolda2009tensor} and the references therein
for more details); this is linked to the definition of rank as sum of 
rank $1$ terms, which are in turn defined as outer
product $v_1 \otimes \dots \otimes v_k$. However, using this 
low-rank format is inherently difficult and unstable. For instance, 
the set of rank $R$ tensors is not closed if $k > 2$, and this 
makes the low-rank approximation problem ill-posed \cite{desilva2008tensor}. Moreover, the computation of the best rank $r$ approximation of a 
tensor is a difficult (indeed, NP-hard, \cite{hillar2013most}), and the solution can only be approximated
by carefully adapted optimization algorithms, see \cite{kressner2014low}
for a review. 

For this reason, there has been interest in finding alternative low-rank
representation of tensors. A very robust possibility that is well-understood is the Tucker decomposition, linked to the Higher
Order SVD (HOSVD) \cite{delathauwer2000multilinear}. However, this
approach requires to store a $k$-dimensional tensor (even though
of smaller sizes), and so is only suited for small values of $k$. 

When one is faced with the task of working with high values of $k$ (say, $k > 5$), and a small number of entries for each mode -- a natural choice
are instead tensor trains \cite{oseledets2011tensor}
or the hierarchical Tucker decomposition \cite{kressner2014htucker}. 

We shall concentrate on the former choice, and in the next section
we briefly recall the main tools that we use in the rest of the paper. 

\subsection{Tensor-train format}

Tensor trains are a technology aimed at treating
high-dimensional problems: they have  already been successfully
applied to Markov chains (see \cite{bolten2016multigrid,kressner2014low})
and to the numerical solution of high-dimensional PDEs (\cite{dolgov2012fast,kazeev2012lowrank} and  \cite{lubich2015time}). 

Let us consider a large system composed
by $k$ smaller components, each with $n_i$ states, $i = 1, \ldots, k$. The
potential state space $\mathcal {PS}$ can then be written as 
\[
  \mathcal {PS} := \{ 1, \ldots, n_1 \} \times \ldots \times
    \{ 1, \ldots, n_k \}. 
\]
At each time $t$, the probability vector $\pi(t)^T = \pi_0^T e^{tQ}$ can
be expressed in tensor form, as an array with $k$ indices
$\pi(t) = \pi(i_1, \ldots, i_k)$. A tensor train representation of 
a tensor $v$
is a collection of order $3$ tensors $M_i$ of size 
$r_{i} \times n_i \times r_{i+1}$ such that\footnote{In particular, 
	$M_1$ and $M_k$ are matrices, instead of order
	$3$, because they have one dimension with only $1$ index.} $r_1 = r_k = 1$, and 
\begin{equation} \label{eq:tt}
  v_{i_1,\ldots,i_k} = \sum_{t_1, \ldots, t_{k-1}} 
    M_1 (1, i_1, t_1) M_2(t_1, i_2, t_2) \ldots  M_k(t_{k-1}, i_k, 1)
\end{equation}
The tensors $M_{i}$ are called \emph{carriages}, hence the name
\emph{tensor train} \cite{oseledets2011tensor}. 
The tuple $(r_2, \ldots, r_{k-1})$ is called the
TT-rank of the tensor $v$. Similarly, matrices $m \times n$ 
can be represented as tensors by subdividing the row and column indices. 
More precisely, a matrix $A \in \mathbb C^{n_1 \ldots n_k \times n_1 \ldots n_k}$
can be viewed (up to permuting the indices) as a larger vector 
of the vector space $\mathbb C^{m_1n_1 \times \ldots \times m_k n_k}$. 
This vector can be stored in the TT-format as described in \eqref{eq:tt}. Such
arrangement makes computing matrix-vector product and matrix-matrix product
relatively simple to implement; we refer the reader to \cite{oseledets2011tensor} for
further details. 

On the software side, a well-established framework \cite{tttoolbox} is available for Python
and MATLAB \cite{MATLAB18} 
We rely on the latter  for our numerical experiments. 

\subsection{Exponential sums}

Given a Kronecker sum $\mathcal A := A_1 \oplus \cdots \oplus A_k$, in a few
cases of interest one can devise an efficient strategy for evaluating 
its inverse (or the action of the inverse on some vector). 
Given the relevance of this problem
in high-dimensional PDEs and various other settings
of applied mathematics, several approaches have been devised over
the years. In this section we briefly recall the one known
after the name of \emph{exponential sums} \cite{braess2005approximation}.

For the sake of self-completeness, 
we briefly recall the main important facts related
to this topic. The idea behind exponential sums is to rephrase the
inverse as a combination of matrix exponentials. The latter are much easier
to compute for a Kronecker sum, as stated by the next Lemma. 

\begin{lemma}
  Let $\mathcal A = A_1 \oplus \ldots \oplus A_k$ be a Kronecker sum. Then,
  the matrix exponential $e^{\mathcal A}$ is given by
  \[
    e^{\mathcal A} = e^{A_1} \otimes e^{A_2} \otimes \cdots \otimes e^{A_k}. 
  \]
\end{lemma}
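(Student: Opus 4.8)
The plan is to prove the identity by reducing it to the well-known fact that $e^{X \otimes I + I \otimes Y} = e^X \otimes e^Y$ whenever the two summands commute, and then proceed by induction on $k$. First I would establish the base case $k = 1$, which is trivial, and the case $k = 2$, where $\mathcal A = A_1 \otimes I + I \otimes A_2$. Here the key observation is that the two terms $A_1 \otimes I$ and $I \otimes A_2$ commute: indeed, $(A_1 \otimes I)(I \otimes A_2) = A_1 \otimes A_2 = (I \otimes A_2)(A_1 \otimes I)$, using the mixed-product property of the Kronecker product. Since the two matrices commute, the matrix exponential of their sum factors as the product of the exponentials, and then I would use $e^{A_1 \otimes I} = e^{A_1} \otimes I$ together with $e^{I \otimes A_2} = I \otimes e^{A_2}$ (each of which follows by writing out the exponential series and noting that $(A_1 \otimes I)^m = A_1^m \otimes I$), so that the product collapses to $(e^{A_1} \otimes I)(I \otimes e^{A_2}) = e^{A_1} \otimes e^{A_2}$.

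For the inductive step, suppose the claim holds for Kronecker sums of $k-1$ matrices. Writing $\mathcal A = A_1 \oplus (A_2 \oplus \cdots \oplus A_k)$, I would view this as a Kronecker sum of two matrices, namely $A_1$ and $\mathcal A' := A_2 \oplus \cdots \oplus A_k$, acting on spaces of sizes $n_1$ and $n_2 \cdots n_k$ respectively. One must check that associativity of the Kronecker product makes this identification legitimate — that is, that $A_1 \oplus \mathcal A'$ really equals $A_1 \oplus A_2 \oplus \cdots \oplus A_k$ as a matrix on the full tensor-product space. Granting this, the $k = 2$ case gives $e^{\mathcal A} = e^{A_1} \otimes e^{\mathcal A'}$, and the induction hypothesis gives $e^{\mathcal A'} = e^{A_2} \otimes \cdots \otimes e^{A_k}$; substituting and using associativity of $\otimes$ one more time yields the desired formula.

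I would expect the main (and really only) obstacle to be bookkeeping rather than a genuine difficulty: one has to be careful that all the identity matrices appearing in the definition of the Kronecker sum have the correct sizes, and that the associativity and mixed-product identities for Kronecker products are being applied to matrices of compatible dimensions. A clean way to sidestep the recursive size-tracking is to argue directly: write $\mathcal A = \sum_{j=1}^{k} B_j$ where $B_j = I \otimes \cdots \otimes I \otimes A_j \otimes I \otimes \cdots \otimes I$, observe via the mixed-product property that $B_i B_j = B_j B_i$ for all $i, j$ (both equal a single Kronecker product with $A_i$ in slot $i$, $A_j$ in slot $j$, and identities elsewhere), hence the family $\{B_j\}$ is mutually commuting and $e^{\mathcal A} = \prod_{j=1}^k e^{B_j}$. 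Finally $e^{B_j} = I \otimes \cdots \otimes e^{A_j} \otimes \cdots \otimes I$ by the series expansion, and multiplying these $k$ factors together — again by the mixed-product property — produces exactly $e^{A_1} \otimes \cdots \otimes e^{A_k}$. This non-inductive route is essentially the same mathematics but keeps the notation flatter.
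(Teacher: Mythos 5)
Your proposal is correct and follows essentially the same route as the paper's proof: the addends $I\otimes\cdots\otimes A_j\otimes\cdots\otimes I$ mutually commute, so the exponential of the sum factors into a product of exponentials, each of which is evaluated via the series expansion and recombined with the mixed-product property. If anything, your flat non-inductive version is more carefully stated than the paper's one-line argument, which loosely invokes ``$e^{A\otimes B}=e^A\otimes e^B$'' where what is really meant (and what you correctly prove) is $e^{A\otimes I}=e^A\otimes I$ and $e^{I\otimes B}=I\otimes e^B$.
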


\begin{proof}
  It suffices to recall that $e^{A + B} = e^A e^B$ whenever $A$ and $B$
  commute; clearly, all the addends in the sum defining $\mathcal A$
  commute, and the result follows by $e^{A \otimes B} = e^{A} \otimes e^{B}$. 
\end{proof}

In view of the previous result, assume we know an expansion of $\frac{1}{x}$
of the following form:
\[
  \frac{1}{x} = \sum_{j = 1}^{\infty} \alpha_j e^{-\beta_j x}, \qquad
  \forall x \in \Lambda(\mathcal A),
\]
where $\Lambda(\cdot)$ denote the spectrum of the operator. Then,
\[
  \mathcal A^{-1} = \sum_{j = 1}^{\infty} \alpha_j e^{-\beta_j \mathcal A}.
\]
Truncating the above series yields an approximation of the inverse, and the
matrix exponentials are very cheap to compute if one knows the factors $A_i$, 
since these are relatively small matrices that can be handled with
dense linear algebra techniques.  It
remains to construct a method to efficiently compute $\alpha_j$ and $\beta_j$ of
such an expansion. We say that
an exponential sum has accuracy $\epsilon > 0$ on the interval $[a, b]$ if, 
for any $x \in [a, b]$, we have $\left| \frac{1}{x} - \sum_{j = 1}^k \alpha_j e^{-\beta_j x}\right|\leq \epsilon$.

\begin{lemma}[\cite{highammatfun}]
	\label{lem:higham}
	Let $A$ be a diagonalizable matrix, $f(z)$ a function, 
	Then, 
	\[
	  \norm{f(A)} \leq C \max_{z \in \Lambda(A)} |f(z)|, 
	\]
	where $\norm{\cdot}$ is any induced norm. 
\end{lemma}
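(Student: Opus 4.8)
The plan is to reduce $f(A)$ to a diagonal matrix via the eigendecomposition and then control the resulting factors by submultiplicativity of the induced norm.

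First I would write $A = V D V^{-1}$, where $D = \mathrm{diag}(\lambda_1, \ldots, \lambda_n)$ collects the eigenvalues of $A$ — this is possible precisely because $A$ is assumed diagonalizable — and the columns of $V$ are the corresponding eigenvectors. Since $f$ is defined on $\Lambda(A)$, the matrix function $f(A)$ is well defined, and $f(A) = V f(D) V^{-1}$ with $f(D) = \mathrm{diag}(f(\lambda_1), \ldots, f(\lambda_n))$. This follows either directly from the definition of $f(A)$ through a polynomial interpolating $f$ on the spectrum, or from $p(A) = V p(D) V^{-1}$ for polynomials $p$ combined with the fact that a primary matrix function agrees with such a polynomial.

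Next, applying submultiplicativity of the induced norm gives $\norm{f(A)} \le \norm{V}\,\norm{f(D)}\,\norm{V^{-1}}$, so it remains to bound $\norm{f(D)}$. If $\norm{\cdot}$ is the spectral norm, then, $f(D)$ being diagonal, $\norm{f(D)} = \max_i |f(\lambda_i)| = \max_{z \in \Lambda(A)} |f(z)|$ exactly (the same identity holds verbatim for any induced $p$-norm). For a general induced norm, by equivalence of norms on the finite-dimensional space of $n \times n$ matrices there is a constant $c$, depending only on the norm and on $n$, with $\norm{f(D)} \le c\,\norm{f(D)}_2 = c \max_{z \in \Lambda(A)} |f(z)|$. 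Setting $C := c\,\norm{V}\,\norm{V^{-1}}$ then yields the assertion, with $C$ depending on $A$ — through the conditioning of its eigenvector matrix $V$ — and on the chosen norm, but not on $f$.

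The argument contains no genuinely hard step; the only point demanding a little care is the clause ``any induced norm'', since the norm of a diagonal matrix is not in general the largest modulus of its diagonal entries. One therefore either restricts to the standard $p$-norms, for which it is, or absorbs the dimension-dependent equivalence constant into $C$ as above. It is also worth emphasising that $C$ is \emph{not} a universal constant: a well-conditioned $V$ makes the bound sharp, whereas for matrices close to defective $\norm{V}\,\norm{V^{-1}}$ — hence $C$ — can be arbitrarily large, which is exactly why this lemma will be applied in our setting only to matrices whose eigenvector bases are under control.
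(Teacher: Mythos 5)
The paper does not actually prove this lemma: it is stated with the citation \cite{highammatfun} and no proof is supplied, so there is no internal argument to compare yours against. Your proof is the standard one (diagonalize $A = VDV^{-1}$, use submultiplicativity to pull out $\norm{V}\norm{V^{-1}}$, and bound $\norm{f(D)}$ by the largest $|f(\lambda_i)|$), and it is correct; it is essentially the argument one finds in Higham's book.

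Your caveat about general induced norms is a genuine and useful observation. The identity $\norm{f(D)} = \max_i |f(\lambda_i)|$ for diagonal $f(D)$ holds for every $p$-norm — and more generally for every \emph{absolute} (equivalently, monotone) vector norm — but \emph{not} for an arbitrary induced norm, where one only has $\rho(f(D)) \leq \norm{f(D)}$ and must pay a dimension-dependent equivalence constant in the other direction. This matters for how the paper then uses the lemma: in Lemma~\ref{lem:error} the constant is pinned down as $C = \norm{V}\norm{V^{-1}}$ with no extra factor, which is exact only when the norm of a diagonal matrix is its spectral radius. Since the paper ultimately specializes to the euclidean norm (where $C=1$ for normal $A$), this gap is harmless in context, but your version — absorbing the norm-equivalence constant into $C$ — is the correct way to make ``any induced norm'' literally true.
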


\begin{lemma} \label{lem:error}
	Let $\alpha_j, \beta_j$ be the coefficients of an exponential sum
	with accuracy $\epsilon$ over $[1, R]$. Then, if $A$ is 
	a diagonalizable matrix
	with spectrum contained in $[1, R]$, we have
	\[
	  \norm*{A^{-1} - \sum_{j = 1}^k \alpha_j e^{-\beta_j A}} \leq
	    C \epsilon 
	\], 
	where $C = \norm{V} \norm{V^{-1}}$, where $V$ is the matrix
	of eigenvectors of $A$, and $\norm{\cdot}$ is any induced norm. In particular, 
	if $\norm{\cdot}$ is the euclidean norm and $A$ is normal, then $C = 1$. 
\end{lemma}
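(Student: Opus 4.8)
The plan is to reduce the claim to the scalar estimate guaranteed by the hypothesis on the exponential sum, combined with the norm bound of Lemma~\ref{lem:higham}. First I would introduce the function
\[
  g(z) := \frac{1}{z} - \sum_{j=1}^{k} \alpha_j e^{-\beta_j z},
\]
so that, by linearity of the functional calculus and the fact that the power series and the matrix exponentials of $A$ are all defined (since $A$ is diagonalizable and $0 \notin [1,R] \supseteq \Lambda(A)$), we have
\[
  A^{-1} - \sum_{j=1}^{k} \alpha_j e^{-\beta_j A} = g(A).
\]
Next I would apply Lemma~\ref{lem:higham} with $f = g$, which gives $\norm{g(A)} \leq C \max_{z \in \Lambda(A)} |g(z)|$ for $C = \norm{V}\norm{V^{-1}}$, where $V$ diagonalizes $A$.

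The remaining step is to control the scalar quantity $\max_{z \in \Lambda(A)} |g(z)|$. Here I would simply observe that $\Lambda(A) \subseteq [1,R]$ by hypothesis, and that the accuracy of the exponential sum means precisely that $|g(x)| \leq \epsilon$ for every $x \in [1,R]$; hence $\max_{z \in \Lambda(A)}|g(z)| \leq \epsilon$. Combining the two bounds yields $\norm{A^{-1} - \sum_j \alpha_j e^{-\beta_j A}} \leq C\epsilon$, as claimed. For the final sentence, if $\norm{\cdot}$ is the Euclidean norm and $A$ is normal, then $A$ admits a unitary diagonalizing matrix $V$, so $\norm{V} = \norm{V^{-1}} = 1$ and therefore $C = 1$.

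There is no serious obstacle here: the lemma is essentially a packaging of Lemma~\ref{lem:higham} together with the definition of accuracy, and the only mild subtlety is making sure the functional calculus identity $g(A) = A^{-1} - \sum_j \alpha_j e^{-\beta_j A}$ is legitimate — which is immediate once one notes $A$ is invertible (its spectrum lies in $[1,R]$, bounded away from $0$) so that $z \mapsto 1/z$ is analytic on a neighborhood of $\Lambda(A)$, and the finite sum of exponentials is entire. One should also state explicitly that the same induced norm is used throughout for Lemma~\ref{lem:higham} to apply, which the hypothesis already arranges.
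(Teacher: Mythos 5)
Your proposal is correct and follows exactly the paper's own argument: apply Lemma~\ref{lem:higham} to the function $f(z) = \frac1z - \sum_{j=1}^k \alpha_j e^{-\beta_j z}$ and use the accuracy hypothesis on $[1,R] \supseteq \Lambda(A)$ to bound the scalar maximum by $\epsilon$. Your additional remarks on the legitimacy of the functional calculus and the unitary diagonalization in the normal case are fine elaborations of what the paper leaves implicit.
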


\begin{proof}
	The result is a straightforward application of Lemma~\ref{lem:higham}
	to the function $f(z) = \frac 1z - \sum_{j = 1}^k \alpha_j e^{-\beta_j z}$ over the domain $[1, R]$. 
\end{proof}

A similar result for the non-diagonalizable case can be obtained relying on the
field of values, for which we refer to \cite{crouzeix2017numerical}.

Lemma~\ref{lem:error} implies that, given a (normal) matrix in Kronecker sum form
$\mathcal A = A_1 \oplus \ldots \oplus A_k$, to achieve an accuracy $\epsilon$ 
in the computation of $\mathcal A^{-1}$ or, equivalently, 
in the solution of the linear system $\mathcal A x = b$, we shall 
obtain an exponential sum with coefficients $\alpha_j, \beta_j$ achieving
that accuracy $\epsilon$ over the eigenvalues of $\mathcal A$. 
More precisely, we can state the following result that related the norm
a matrix function to the spectral properties of the matrices $A_i$. 

\begin{lemma}
	Let $\mathcal A = A_1 \oplus \ldots \oplus A_k$, where $A_i$ are diagonalizable for 
	$i = 1, \ldots, k$. Then, given a function $f(z)$ defined on $\Lambda(A_1) + 
	\ldots + \Lambda(A_k)$, we have 
	\[
	  \norm{f(\mathcal A)}_2 \leq \left( \prod_{1 \leq i \leq k} \norm{V_i}_2 \norm{V_i^{-1}}_2\right) \cdot \max \left\{ 
	    |f(\lambda_{i_1} + \ldots + \lambda_{i_k})| \ , \ 
	    \lambda_{i_j} \in \Lambda(A_j)
	  \right\},
	\]
	where $V_i$ is the matrix of the eigenvectors of $A_i$. 
\end{lemma}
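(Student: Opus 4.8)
The plan is to reduce the estimate for $\norm{f(\mathcal A)}_2$ to the already-proven Lemma~\ref{lem:higham} (in its generalized form) by diagonalizing $\mathcal A$ explicitly in terms of the eigendecompositions of the factors $A_i$. First I would write $A_i = V_i D_i V_i^{-1}$ with $D_i = \operatorname{diag}(\lambda_1^{(i)}, \ldots, \lambda_{n_i}^{(i)})$, which exists since each $A_i$ is diagonalizable. Then I would observe, using the mixed-product property of the Kronecker product, that
\[
  \mathcal A = (V_1 \otimes \cdots \otimes V_k)\,
    (D_1 \oplus \cdots \oplus D_k)\,
    (V_1 \otimes \cdots \otimes V_k)^{-1},
\]
because conjugating a Kronecker sum term by term preserves the sum structure and $(V_1 \otimes \cdots \otimes V_k)^{-1} = V_1^{-1} \otimes \cdots \otimes V_k^{-1}$. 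Hence $\mathcal A$ is diagonalizable with eigenvector matrix $V := V_1 \otimes \cdots \otimes V_k$, and its spectrum is exactly $\Lambda(A_1) + \ldots + \Lambda(A_k)$, i.e.\ all sums $\lambda_{i_1} + \ldots + \lambda_{i_k}$ with $\lambda_{i_j} \in \Lambda(A_j)$.

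Next I would apply Lemma~\ref{lem:higham} to $\mathcal A$ with the function $f$: since $f$ is defined on $\Lambda(\mathcal A)$, we get
\[
  \norm{f(\mathcal A)}_2 \leq \norm{V}_2 \norm{V^{-1}}_2 \cdot
    \max_{z \in \Lambda(\mathcal A)} |f(z)|
  = \norm{V}_2 \norm{V^{-1}}_2 \cdot
    \max\{ |f(\lambda_{i_1} + \ldots + \lambda_{i_k})| : \lambda_{i_j} \in \Lambda(A_j) \},
\]
where the constant $C$ of Lemma~\ref{lem:higham} is taken to be $\norm{V}_2 \norm{V^{-1}}_2$ (this is the natural choice for $C$ coming from the diagonalization-based proof of that lemma). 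It then remains to bound the conditioning factor: by submultiplicativity of the spectral norm under Kronecker products, $\norm{V}_2 = \prod_{i=1}^k \norm{V_i}_2$ and $\norm{V^{-1}}_2 = \prod_{i=1}^k \norm{V_i^{-1}}_2$, so $\norm{V}_2 \norm{V^{-1}}_2 = \prod_{1 \leq i \leq k} \norm{V_i}_2 \norm{V_i^{-1}}_2$, which is precisely the prefactor in the statement.

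The only genuinely delicate point is making sure the eigenvector matrix of $\mathcal A$ is literally $V_1 \otimes \cdots \otimes V_k$ and that its condition number factorizes — both follow from the mixed-product identity $(A \otimes B)(C \otimes D) = (AC) \otimes (BD)$ and the fact that the singular values of a Kronecker product are the pairwise products of the singular values of the factors. One should also note that $f(\mathcal A)$ is well-defined since $f$ is specified on the full spectrum $\Lambda(A_1) + \ldots + \Lambda(A_k)$, and that the maximum in the bound is over a finite set, so it is attained. Everything else is routine Kronecker algebra; I would keep the write-up short and point to the earlier lemmas for the scalar estimate.
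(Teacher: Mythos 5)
Your proposal is correct and follows essentially the same route as the paper: diagonalize $\mathcal A$ via $V = V_1 \otimes \cdots \otimes V_k$, apply the scalar bound $\norm{f(\mathcal A)}_2 \leq \norm{V}_2\norm{V^{-1}}_2 \max_{z \in \Lambda(\mathcal A)}|f(z)|$, and factorize the condition number using the multiplicativity of the spectral norm under Kronecker products. Your explicit use of the mixed-product property and your appeal to Lemma~\ref{lem:higham} (rather than the paper's citation of Lemma~\ref{lem:error}, which is arguably a misreference) only make the argument cleaner; there is no gap.
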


\begin{proof}
	We note that $\mathcal A$ is diagonalizable if and only if $A_i$ are, and in
	this case the matrix of eigenvectors is given by $V_1 \otimes \ldots \otimes V_k$.
	Applying Lemma~\ref{lem:error} yields the bound 
	\[
	  \norm{f(\mathcal A)} \leq \norm{V} \cdot \norm{V^{-1}}_2 \cdot \max \{ 
	  |f(\lambda_{i_1} + \ldots + \lambda_{i_k})| \ , \ 
	  \lambda_{i_j} \in \Lambda(A_j). 
	\]
	The conclusion follows noting that $\norm{V}_2 = \prod_{1 \leq i \leq k} \norm{V_i}_2$, 
	and the similar result for its inverse. 
\end{proof}

The construction
of the coefficients $\alpha_j, \beta_j$ is beyond the scope of this
paper; our approach relies on \cite{braess2005approximation}, to which we refer
for further details.  In particular, we rely on the construction
of the exponential sums using sinc quadrature; since we know the exact 
condition number, we may use the precomputed tables available at \cite{hackbusch2019}, which cannot be easily computed on the fly, since they
require a specially adapted Newton method with extended precision to reach
high accuracy. This would potentially add another speed up to the code, 
since they have a faster convergence rate.

\section{Computing the MTTA}
\label{sec:MTTA}

In \cite{masetti2018computing} it has been shown that the computation of
several performability measures can be recast as the evaluation of
a matrix function. In most cases, one has to compute $w^T f(Q) v$
for appropriate vectors $v, w$, and a certain function $f(z)$. In this
work, we focus on the computation of the \emph{mean-time-to-absorption}.

We assume that the states of the Markov chain are labeled with
the integers from $1$ to $N = |\mathcal{PS}|$, and that there
is a single absorbing state, which can be assumed to have index $N$. 
Then, we compute the quantity
\begin{equation}
\label{eq:defMTTA}
  \mtta = \int_0^{\infty} \mathbb P \{ X(\tau) < N \}\ d\tau =
  \mathbb E \left[ \int_0^\infty \mathbbm 1_{\{1,\dots,N-1\}}(X(\tau)) \ d\tau  \right]. 
\end{equation}
Often, we are interested in the case where the absorbing state corresponds
to the failure state of the system. In this case, we use the name
\emph{mean-time-to-failure} and the notation $\mathrm{MTTF}$. We
assume that the matrix $Q$ is partitioned as follows:
\[
  Q = \left[\begin{array}{ccc|c}
  &&& v_1\\
  & \hat Q & & \vdots \\
  &&& v_{N-1}\\  \hline
  0 & \dots & 0& 0 \\
  \end{array} \right], 
\]
where the last row is forced to be zero because the state $N$ is
absorbing. Following \cite{trivedi2017reliability}, we know that
\begin{equation}
  \label{eq:mtta}
  \mtta = - \hat \pi_0^T \hat Q^{-1} e =
  \pi_0^T f(Q) (e - e_Ne_N^T), \qquad
  f(z) =
  \begin{cases}
    -\frac{1}{z} & z \neq 0\\
    0           & \text{otherwise}
  \end{cases}
\end{equation}
where $e$ is the vector of all ones, and $\hat \pi_0$ contain
the first $N - 1$ entries of $\pi_0$. The aim is to consider
the case where $Q$ can be efficiently represented in the TT format.

\begin{remark} \label{rem:submatrix}
	The fact that $Q$ has a low-rank tensorial structure (in the TT sense)
	does not imply any particular structure for $\hat Q$. Indeed, the
	tensor structure requires making use of the isomorphism
	$\mathbb C^N = \mathbb C^{n_1} \times \ldots \times \mathbb C^{n_k}$, 
	which in turn is related to factorizing $N = n_1 \cdots n_k$. 
	Knowing the factors $n_i$ does not give any
	information on a similar factorization for $N - 1$, nor 
	on the low-rank carriages that might be used to represent $\hat Q$.
\end{remark}

In particular, a well-known method for transforming the MTTA problem
into the computation of a steady state vector of a irreducible
Markov chain, is to add a transition from the absorbing state
back to the starting one \cite{TB17}. However, this transforms the 
problem into an eigenvector computation of $Q + \delta Q$, where
$\delta Q$ has rank $1$. This problem might be addressed directly, 
exploiting the techniques presented in \cite{kressner2014low} 
for computing steady state probabilities in the TT-format; recasting
it into solving an augmented linear system (adding a row of ones to ensure
that the computed eigenvector is a probability) is instead undesirable, because
it would lead to a loss of tensor structure as reported in Remark~\ref{rem:submatrix}. 

Instead, here an alternative strategy is presented: 
we introduce an auxiliary matrix $S$ which allows to rephrase
the measure using the inverse of a low-rank perturbation of $Q$. 

\begin{lemma} \label{lem:S}
	Let $Q$ the infinitesimal generator of a continuous time Markov chain
	with exponential rates with $N$ states; assume that the state $N$ 
	is the only failure state, and let $S$ be the rank $1$
        matrix defined as 
	\[
	  S = (Q e_N) e_N^T - e_N e_N^T. 
	\]
	Then, if $\pi_0$ has the $N$-th component equal to $0$, we have 
	$\mtta = - \pi_0^T (Q - S)^{-1} e$, where $e$ is the vector of 
	all ones. 
\end{lemma}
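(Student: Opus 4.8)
The plan is to use the $1+(N-1)$ block partition that singles out the absorbing state $N$ --- the same partition of $Q$ displayed just before~\eqref{eq:mtta} --- and to observe that subtracting $S$ exactly removes the coupling of the absorbing state with the rest of the chain, so that $Q-S$ becomes block diagonal and its inverse is immediate. Concretely, I expect the claim to reduce to the already available identity~\eqref{eq:mtta} for $\mtta$ by pure block bookkeeping.

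First I would rewrite $S$. Since $e_N$ is the $N$-th canonical basis vector, $Q e_N$ is the last column of $Q$, namely $(v_1,\dots,v_{N-1},0)^T$ in the notation of that partition, so $S = (Q e_N - e_N)e_N^T$ is the outer product of $(v_1,\dots,v_{N-1},-1)^T$ with $e_N$ (which also makes the rank-$1$ claim transparent). Writing $v = (v_1,\dots,v_{N-1})^T$ and $\hat e,\hat\pi_0$ for the length-$(N-1)$ vectors obtained from $e,\pi_0$ by dropping the last entry, in block form this reads
\[
  S = \begin{bmatrix} 0 & v \\ 0 & -1 \end{bmatrix}, \qquad
  Q - S = \begin{bmatrix} \hat Q & 0 \\ 0 & 1 \end{bmatrix};
\]
the effect of $S$ is thus to erase the coupling column $v$ and to replace the $(N,N)$ entry, which is $0$, by $1$.

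Next I would invert $Q-S$, which only requires that $\hat Q$ be nonsingular --- the standard fact that the sub-generator obtained by deleting the unique (reachable) absorbing state is an invertible M-matrix, which is exactly the hypothesis under which~\eqref{eq:mtta} is meaningful. Then $(Q-S)^{-1}$ is block diagonal with blocks $\hat Q^{-1}$ and $1$. Using finally that the $N$-th component of $\pi_0$ vanishes, so that $\pi_0 = (\hat\pi_0^T, 0)^T$, a one-line block multiplication gives
\[
  -\pi_0^T (Q - S)^{-1} e
  = -\begin{bmatrix} \hat\pi_0^T & 0 \end{bmatrix}
      \begin{bmatrix} \hat Q^{-1} & 0 \\ 0 & 1 \end{bmatrix}
      \begin{bmatrix} \hat e \\ 1 \end{bmatrix}
  = -\hat\pi_0^T \hat Q^{-1} \hat e = \mtta,
\]
the last equality being~\eqref{eq:mtta} (where the all-ones vector occurring there is the length-$(N-1)$ one, here denoted $\hat e$).

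There is no real obstacle: the computation is elementary and the only external input is the invertibility of $\hat Q$, inherited from the setting in which $\mtta$ is defined. The single point worth stating carefully is the block identification of the two rank-$1$ pieces of $S$, so that it is transparent that $Q-S$ has trivial last row and column and hence decouples.
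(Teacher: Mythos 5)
Your proposal is correct and follows essentially the same route as the paper's own proof: both observe that subtracting $S$ makes $Q-S$ block diagonal with blocks $\hat Q$ and $1$, invert blockwise, and use $[\pi_0]_N = 0$ to reduce to the identity \eqref{eq:mtta}. Your version merely makes the block identification of $S$ more explicit, which the paper leaves implicit under ``by construction.''
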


\begin{proof}
  By construction, we have that $Q - S$ is block diagonal
  and therefore
  \[
    (Q - S)^{-1} =
    \begin{bmatrix}
      \hat Q \\
      & 1 \\
    \end{bmatrix}^{-1} =
    \begin{bmatrix}
      \hat Q^{-1} \\
      & 1 \\
    \end{bmatrix}, 
  \]
  and
  $-\pi_0^T (Q - S)^{-1} e = -\hat \pi_0^T \hat Q^{-1} e - [\pi_0]_N =
  \mtta - [\pi_0]_N$. We conclude noting that $ [\pi_0]_N = 0$. 
\end{proof}

\begin{remark}
Note that the TT-rank of $S$ is $(1, \ldots, 1)$, since it 
is a matrix of rank $1$ \cite{oseledets2011tensor}, 
and if $Q$ has a low TT-rank the same holds
for $Q - S$.   
\end{remark}

The important consequence of Lemma~\ref{lem:S} is that, even though we 
cannot extract a submatrix from $Q$ to compute the $\mtta$, we can make a 
rank $1$ (both in CP and in the TT sense) perturbation $S$ to $Q$, such
that $Q - S$ is invertible, and allows to easily obtain
the same result.

\begin{lemma} \label{lem:gamma}
  Let $v = e + \gamma e_N$, for any $\gamma \in \mathbb R$. Then, with
  the notation of Lemma~\ref{lem:S}, we have
  \[
    \mtta = -\pi_0^T (Q - S)^{-1} e = -\pi_0^T (Q - S)^{-1} v. 
  \]
\end{lemma}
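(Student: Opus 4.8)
The plan is to show that $(Q-S)^{-1}$ annihilates the extra term $\gamma e_N$, i.e. that $(Q-S)^{-1}(\gamma e_N)$ contributes nothing after taking the inner product with $\pi_0$. From the proof of Lemma~\ref{lem:S} we already have the explicit block form
\[
  (Q-S)^{-1} = \begin{bmatrix} \hat Q^{-1} \\ & 1 \end{bmatrix},
\]
so $(Q-S)^{-1} e_N = e_N$. Hence $(Q-S)^{-1} v = (Q-S)^{-1} e + \gamma (Q-S)^{-1} e_N = (Q-S)^{-1} e + \gamma e_N$, and therefore
\[
  -\pi_0^T (Q-S)^{-1} v = -\pi_0^T (Q-S)^{-1} e - \gamma\, \pi_0^T e_N
   = -\pi_0^T (Q-S)^{-1} e - \gamma\, [\pi_0]_N.
\]

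The second step is simply to invoke the standing hypothesis (inherited from Lemma~\ref{lem:S}) that $[\pi_0]_N = 0$, which kills the $\gamma$ term regardless of its value. Combining this with the identity $-\pi_0^T(Q-S)^{-1} e = \mathrm{MTTA}$ established in Lemma~\ref{lem:S} gives the claimed chain of equalities
\[
  \mathrm{MTTA} = -\pi_0^T (Q-S)^{-1} e = -\pi_0^T (Q-S)^{-1} v.
\]

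There is essentially no obstacle here: the only thing to be careful about is that the argument relies on $Q-S$ being exactly block-diagonal with the $(N,N)$ block equal to $1$, so that $e_N$ is genuinely an eigenvector of $(Q-S)^{-1}$ with eigenvalue $1$ — but this is precisely what was computed in the proof of Lemma~\ref{lem:S}, so it may be quoted directly. The real content of the lemma is not the verification but its purpose: it gives a free parameter $\gamma$ that can later be tuned (for instance, to improve the TT-rank, the conditioning, or the convergence rate of an iterative solver applied to $Q-S$) without changing the computed value of the MTTA.
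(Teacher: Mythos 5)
Your proof is correct and uses exactly the same ingredients as the paper's: the block-diagonal form of $(Q-S)^{-1}$ from Lemma~\ref{lem:S} and the hypothesis $[\pi_0]_N = 0$ (the paper simply evaluates $\pi_0^T(Q-S)^{-1}$ first and notes its last entry vanishes, whereas you apply $(Q-S)^{-1}$ to $v$ first and then kill the $\gamma e_N$ term against $\pi_0$). No gaps.
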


\begin{proof}
  Note that, $[\pi_0]_N = 0$, and therefore
  \[
    -\pi_0^T (Q - S)^{-1} v =
    - \begin{bmatrix}
      & \hat\pi_0^T \hat Q^{-1} & & 0 
    \end{bmatrix}
    \begin{bmatrix}
      1 \\ \vdots \\ 1 \\ 1 + \gamma
    \end{bmatrix} = -\hat\pi_0^T \hat Q^{-1} e = \mtta.
  \]
\end{proof}

We have recast the problem to solving a linear system 
$(Q - S) x = e$, where the matrix $Q - S$ is expressed in TT format. 
Unfortunately, as we will see later on, a few problem of interest
for us do not play very well together with the more widespread 
tensor train system solvers (such as AMEN \cite{dolgov2014alternating} or DMRG \cite{oseledets2012solution}). For this
reason, we propose a different solution scheme based on the
\emph{Neumann expansion}. In particular, let $M$ be any matrix
with spectral radius strictly smaller than $1$. Then, 
\begin{equation} \label{eq:neumann}
  (I - M)^{-1} = I + M + M^2 + M^3 + \ldots = \sum_{j = 0}^\infty M^j
\end{equation}
If we partition $Q$ as $Q = Q_1 + Q_2$, we can write
\[
  (Q - S)^{-1} = (Q_1 + Q_2 - S)^{-1} = (I + Q_1^{-1}(Q_2 - S))^{-1} Q_1^{-1}. 
\]
Setting $M = - Q_1^{-1}(Q_2 - S)$, assuming that $\rho(M) < 1$ and
using the Neumann expansion \eqref{eq:neumann} we obtain
\begin{align}  
  (Q - S)^{-1} = \sum_{j = 0}^\infty (-1)^j (Q_1^{-1} (Q_2 - S))^j Q_1^{-1}. 
\end{align}
The above formula can be used to approximate 
$x = (Q - S)^{-1} e$ as needed for \eqref{eq:mtta} by truncating the
infinite sum to $\ell$ terms:
\[
  x_\ell = \sum_{j = 0}^\ell (-1)^j (Q_1^{-1} (Q_2 - S))^j Q_1^{-1} e, \qquad 
  \norm{x - x_\ell}_\infty \leq C \rho(M)^{\ell+1}, 
\]
for an appropriate constant $C$. 
We give an explicit method to construct
the additive splitting $Q = Q_1 + Q_2$ so that $M$ is guaranteed
to have spectral radius less than $1$. This will be achieved in
Theorem~\ref{thm:neumann}. The pseudocode describing the resulting
method is presented in Algorithm~\ref{alg:neumann}.

\begin{algorithm}
  \caption{Neumann series \eqref{eq:neumann} to approximate $x = (Q - S)^{-1}e$} \label{alg:neumann}
  \begin{algorithmic}[1]
    \Procedure{NeumanSeries}{$Q_1,Q_2,\ell$}
    \State $y \gets Q_1^{-1} e$
    \State $x \gets y$
    \For{$j = 1, \ldots, \ell$}
    \State $y \gets - Q_1^{-1} (Q_2 - S) y$
    \State $x \gets x + y$
    \EndFor
    \State \Return $x$  
    \EndProcedure
  \end{algorithmic}
\end{algorithm}

This method has a linear convergence rate \cite{demmel1997applied},
which is given by $\rho(M)$. However, it can be accelerated
to obtain a quadratically convergent method by a simple modification. Note
that we can refactor (\ref{eq:neumann}) as follows:
\begin{equation} \label{eq:neumann2}
  (I - M)^{-1} = (I + M) (I + M^2) (I + M^4) \cdots (I + M^{2^j}) \cdots
\end{equation}
Truncating the above equation and permuting the factors $(I + M^{2^j})$
yields another method to approximate
$x = (Q - S)^{-1}e$, which has a much faster convergence, and is
described by the equation:
\[
  (I - M)^{-1} Q_1^{-1} e \approx x_{2^{\ell+1}-1} = (I + M^{2^\ell}) (I + M^{2^{\ell-1}}) \cdots (I + M^2) (I + M) Q_1^{-1} e
\]
In particular, the result of $\ell$ steps of this method gives
the same result of $2^{\ell+1}-1$ of Algorithm~\ref{alg:neumann}. 
The pseudocode for this approach is given in Algorithm~\ref{alg:neumann2}. As
visible on line~\ref{alg:M2}, this method required to store the repeated squares
of the matrix $M$. 

\begin{algorithm}
  \caption{Neumann series \eqref{eq:neumann2} to approximate $x = (Q - S)^{-1}e$}
  \label{alg:neumann2}
  \begin{algorithmic}[1]
    \Procedure{NeumanSeries}{$Q_1,Q_2,\ell$}
    \State $M \gets -Q_1^{-1}(Q_2 - S)$
    \State $x \gets Q_1^{-1} e + M Q_1^{-1} e$
    \For{$j = 2, \ldots, \ell$}
    \State $M \gets M^2$ \label{alg:M2}
    \State $x \gets x + Mx$
    \EndFor
    \State \Return $x$  
    \EndProcedure
  \end{algorithmic}
\end{algorithm}

\begin{remark}
  A favorable property of both approaches is that the convergence of
  $x_\ell$ to $x$ is monotonically decreasing and non-positive. That is,
  for each $\ell' \leq \ell$ we have $x_{\ell'} \geq x_\ell$. Since the $\mtta$\
  is equal to $-\pi_0^T x$, 
  the estimates $-\pi_0^T x_\ell$ of the $\mtta$\ obtained in the intermediate steps are
  guaranteed lower bounds.
\end{remark}

We note that Algorithm~\ref{alg:neumann} can be slightly modified
to compute $\pi_0^T(Q - S)^{-1}$ instead of $(Q - S)^{-1} e$. Both
vectors can then be used to compute the MTTA through a dot product. However, the former choice has the advantage that
$\pi_0(s) = 0$  implies that $x(s) = 0$ throughout the iterations for
every $s \in \mathcal{PS}\setminus \mathcal{RS}$. In particular, 
the non-reachable part of the chain has no effect on the computed
tensor, and this helps to keep the TT-rank low during the iterations. 
The modified algorithm is reported for completeness in Algorithm~\ref{alg:neumannt}, where now $x,y$ are row vectors. 

\begin{algorithm}
	\caption{Neumann series \eqref{eq:neumann} to approximate $\pi^T = \pi_0^T (Q - S)^{-1}$} \label{alg:neumannt}
	\begin{algorithmic}[1]
		\Procedure{NeumanSeries}{$Q_1, Q_2,\ell$}
		\State $y \gets \pi_0^T$
		\State $x \gets y$
		\For{$j = 1, \ldots, \ell$}
		\State $y \gets - y Q_1^{-1} (Q_2 - S)$
		\State $x \gets x + y$
		\EndFor
		\State $x \gets x Q_1^{-1}$
		\State \Return $x$  
		\EndProcedure
	\end{algorithmic}
\end{algorithm}

\begin{lemma} \label{lem:non-negative}
	Let $A \geq 0$ be a non-negative $N \times N$ 
	matrix, and $e$ the vector of all ones. Then, 
	\[
	  \rho(A) \leq \norm{A}_\infty = \max_{1 \leq i \leq N} (A e)
	\]
	Moreover, if there is at least one component of $A e$ strictly
	smaller than $\norm{A}_\infty$, then $\rho(A) < \norm{A}_\infty$. 
\end{lemma}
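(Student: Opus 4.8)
The plan is to treat the two assertions separately. For the inequality $\rho(A) \leq \norm{A}_\infty$, I would invoke the standard fact that the spectral radius is bounded by any induced matrix norm, together with the identification of the induced $\infty$-norm with the maximum absolute row sum, $\norm{A}_\infty = \max_i \sum_j |A_{ij}|$. Since $A \geq 0$ the absolute values are superfluous and $\sum_j A_{ij} = (Ae)_i$, so $\norm{A}_\infty = \max_i (Ae)_i$ and the first claim follows at once. If one prefers a self-contained argument, one can avoid quoting $\rho(A) \leq \norm{A}$ and instead take an eigenpair $(\lambda, x)$ with $|\lambda| = \rho(A)$ and $\norm{x}_\infty = |x_i| = 1$; then $\rho(A) = |\lambda x_i| = |\sum_j A_{ij} x_j| \leq \sum_j A_{ij} |x_j| \leq \sum_j A_{ij} = (Ae)_i \leq \norm{A}_\infty$.

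For the strict inequality I would use the Perron--Frobenius theorem in the form valid for arbitrary non-negative matrices: $s := \rho(A)$ is itself an eigenvalue of $A$, with some eigenvector $x \geq 0$, $x \neq 0$. Let $T = \{ j : x_j = \max_\ell x_\ell \}$, the non-empty set on which $x$ attains its (positive) maximum value $x_{\max}$. Assume, for contradiction, that $s = \norm{A}_\infty$. For any $i \in T$ we then have $s\, x_{\max} = (Ax)_i = \sum_j A_{ij} x_j \leq x_{\max} \sum_j A_{ij} = x_{\max} (Ae)_i \leq x_{\max} \norm{A}_\infty = s\, x_{\max}$, so every inequality in this chain is an equality; in particular $(Ae)_i = \norm{A}_\infty$ and $\sum_j A_{ij}(x_{\max} - x_j) = 0$, which, all summands being non-negative, forces $A_{ij} = 0$ whenever $j \notin T$. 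Since this holds for every $i \in T$, the set $T$ carries no transitions to its complement. If $T$ is the whole index set, then $x = x_{\max} e$ and $Ax = s x$ gives $Ae = \norm{A}_\infty e$, contradicting the hypothesis that some component of $Ae$ lies strictly below $\norm{A}_\infty$. If $T$ is a proper non-empty subset, then $T$ is a closed class under $A$, contradicting irreducibility. Either way we reach a contradiction, and hence $s < \norm{A}_\infty$.

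The step I expect to be the crux is the last one. The chain of inequalities by itself only delivers $s \leq \norm{A}_\infty$, and the reducible example $A = \mathrm{diag}(2, 1)$ shows that the strict statement genuinely requires a connectivity hypothesis: the ``heavy'' part of $A$ must communicate with a row of strictly smaller sum. So the real work is in pinning down the right irreducibility assumption --- automatically available in the Markov-chain setting of the paper, where $M = -Q_1^{-1}(Q_2 - S)$ inherits irreducibility from the underlying chain --- and in running the equality-case analysis of the Perron eigenvector precisely enough to exploit it. The remaining manipulations are routine.
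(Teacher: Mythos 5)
Your proof of the first assertion matches the paper's (which simply observes that $\norm{A}_\infty$ is the maximum row sum for a non-negative matrix and that the spectral radius is bounded by any induced norm). For the second assertion, however, you have done something genuinely different --- and more careful --- than the paper. The paper disposes of the strict inequality with the single sentence that it ``follows directly by the first Gerschgorin theorem''. That is not enough: the first Gershgorin theorem only localizes the spectrum in the \emph{union} of the discs, and since the rows with maximal sum still contribute discs reaching out to $\norm{A}_\infty$, one recovers only the non-strict bound $\rho(A) \leq \norm{A}_\infty$. Your counterexample $A = \mathrm{diag}(2,1)$ is exactly to the point: it satisfies every hypothesis of the lemma as written, yet $\rho(A) = \norm{A}_\infty$. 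So the lemma is false as stated, and the missing hypothesis is irreducibility; the result the authors presumably intend is Taussky's refinement of Gershgorin (an eigenvalue on the boundary of the union of the discs of an irreducible matrix must lie on the boundary of every disc), which immediately gives strictness when one row sum is strictly smaller.

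Your Perron--Frobenius equality-case argument is a correct, self-contained substitute for Taussky's theorem under the added irreducibility assumption: the chain of equalities forces the set $T$ where the Perron vector is maximal to be closed under $A$, and irreducibility (or, if $T$ is everything, the existence of a deficient row sum) yields the contradiction. The one caveat is your closing remark that irreducibility is ``automatically available in the Markov-chain setting of the paper'': that is not obviously true for the iteration matrix $M = -Q_1^{-1}(Q_2 - S)$, since the chain has an absorbing state and the potential state space may contain unreachable states. Fortunately this does not matter for the paper's main development, because Theorem~\ref{thm:neumann} never uses the strict part of this lemma --- it proves $\norm{M}_\infty < 1$ directly by showing that \emph{every} component of $|M|e$ is strictly below $1$, which needs only the (correct) first assertion.
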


\begin{proof}
  The first statement is the definition of infinity norm, whereas the
  second follows directly by the first Gerschgorin theorem.
\end{proof}

The next result provides a technique for splitting an infinitesimal
general $Q$ (i.e., up to a change of sign,
any $M$-matrix with zero row sums) in a way that allow to perform
the Neumann expansion. Let us first recall a few properties
of diagonally dominant matrices. 

This observation implies that, even though in view of Remark~\ref{rem:submatrix} we have not restricted the problem
to the set of reachable states, 
when running Algorithm~\ref{alg:neumannt} the iteration is implicitly
restricted to this set, 
as $x_\ell$ are vectors with positive components only for indices
in $\mathcal{RS}$. 

The next results are aimed at constructing the additive splitting
for $Q$ that satisfies the hypotheses for the Neumann expansion. 

\begin{lemma} \label{lem:diagdomsign}
	Let $A = D - B$, with $D < 0$ and diagonal, $B \geq 0$, and 
	$Be < -D e$, where $e$ is the vector with all components equal to $1$. Then, $A$ is invertible and $A^{-1} \leq 0$. 
\end{lemma}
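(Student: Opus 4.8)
The plan is to reduce the statement to the fact that $-A$ is a nonsingular $M$-matrix, whose inverse is nonnegative. First I would fix the sign structure: since $D$ is diagonal and $B \ge 0$, the off-diagonal part of $A$ has a single sign, while the strict inequality $Be < -De$ forces strict diagonal dominance, so that every diagonal entry of $A$ is nonzero and the diagonal matrix $\Delta := \mathrm{diag}(A)$ is invertible. I would then isolate this diagonal and write $A = \Delta(I - N)$ with $N := I - \Delta^{-1}A = \Delta^{-1}C$, where $C := \Delta - A$ is (up to sign) the off-diagonal part of $A$; a short sign check shows that, with the sign of $\Delta$ determined, $N$ is a nonnegative matrix.

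The core step is to control the row sums of $N$. Evaluating $Ne = \Delta^{-1} C e$ componentwise, one sees that the hypothesis $Be < -De$ is precisely the inequality that gives $Ne < e$. Lemma~\ref{lem:non-negative} applied to the nonnegative matrix $N$ then yields $\rho(N) \le \norm{N}_\infty = \max_i (Ne)_i < 1$, so $I - N$ is invertible and, by the Neumann expansion \eqref{eq:neumann},
\[
  (I - N)^{-1} = \sum_{j=0}^{\infty} N^j \ge 0,
\]
each summand being a product of nonnegative matrices. Hence $A^{-1} = (I-N)^{-1}\Delta^{-1}$ is a nonnegative matrix times a sign-definite diagonal, and reading off the sign of $\Delta^{-1}$ gives $A^{-1} \le 0$.

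The only genuinely delicate part is the sign bookkeeping: one has to make sure that it is strictness of the dominance that makes $\Delta$ invertible (a non-strict inequality would fail), that the sign of $\Delta^{-1}$ makes $N$ come out nonnegative rather than nonpositive, and that the same sign turns the nonnegative matrix $(I-N)^{-1}\Delta^{-1}$ into the bound $A^{-1} \le 0$ and not $A^{-1}\ge 0$. A shorter but less self-contained alternative is to bypass the Neumann series and invoke the standard characterization that a strictly row-diagonally-dominant $Z$-matrix with positive diagonal is a nonsingular $M$-matrix with nonnegative inverse, applied to $-A$; working through Lemma~\ref{lem:non-negative} is, however, more consistent with the machinery already set up in this section.
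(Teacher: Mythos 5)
Your argument is, in substance, the same as the paper's own proof: both rescale by the (negative) diagonal to reduce $A$ to a Jacobi-type splitting $\Delta(I-N)$ with $N\geq 0$ and $\norm{N}_\infty<1$ (via the row-sum bound coming from $Be<-De$, i.e.\ Lemma~\ref{lem:non-negative}), sum the Neumann series termwise to get $(I-N)^{-1}\geq 0$, and undo the diagonal scaling to conclude $A^{-1}\leq 0$; both also point to the $M$-matrix/regular-splitting theory as a shortcut. The only difference is cosmetic: you normalize by $\Delta=\mathrm{diag}(A)$, the paper by $D$ itself.

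One caveat, and it sits exactly at the point you label ``the only genuinely delicate part.'' Your claim that the sign check makes $N=\Delta^{-1}(\Delta-A)$ come out \emph{nonnegative} holds only when the off-diagonal part of $A$ is nonnegative, i.e.\ for the convention $A=D+B$. With the statement as printed ($A=D-B$, $B\geq 0$) the off-diagonal part of $A$ is nonpositive, so $C=\Delta-A\geq 0$ and $N=\Delta^{-1}C\leq 0$; the Neumann series then alternates in sign and the termwise-positivity argument collapses. Indeed the printed statement is false: with $D=-3I$ and $B=ee^T$ in dimension $2$ one has $Be=(2,2)^T<(3,3)^T=-De$, yet $A=D-B$ has inverse $\frac{1}{15}\left(\begin{smallmatrix}-4&1\\1&-4\end{smallmatrix}\right)\not\leq 0$. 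What you actually prove (and what the paper's proof, which contains the same sign slip, actually proves) is the version with $A=D+B$ --- which is the version needed later for $D+A_1$ in Theorem~\ref{thm:neumann}. So the proof is right for the lemma that is used; just state the sign convention explicitly instead of deferring it to ``a short sign check.''
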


\begin{proof}
    This fact can be easily prove using the tools from theory 
    of nonnegative matrices, since $A$ is an $M$-matrix
    and $D - B$ is a regular splitting (see for instance \cite{plemmons1994nonnegative}). We provide a simple 
    proof for the sake of completeness. Since $D < 0$, the 
    condition $A^{-1} \leq 0$ is equivalent to $(D^{-1} A)^{-1} \geq 0$.
    Moreover, the strict row diagonal dominance implies that 
    $\norm{D^{-1} A}_{\infty} < 1$, so we have 
    \[
      (D^{-1} A)^{-1} = (I - (-D^{-1}A))^{-1} = \sum_{j \geq 0} 
        (-D^{-1}A)^j \geq 0, 
    \]
    where we have used that $(-D^{-1}A) \geq 0$. 
\end{proof}

\begin{theorem} \label{thm:neumann}
	Let $A = D + A_1 + A_2$ any $N \times N$ matrix such as $D$ is diagonal and non-positive, 
	$A_1, A_2$ are non negative, $e_N^T (D + A_1 + A_2) = e_N^T A_1 = 0$, and $(D + A_1 + A_2) e = 0$. Then, 
	if 
	we define $S = (A_1 + A_2) e_N e_N^T$, $(D + A_1)$ is invertible and $\min_{i = 1, \ldots, N-1} A_{iN} > 0$, 
	we have that $\norm{(D + A_1)^{-1} (A_2 - S)}_\infty < 1$. 
\end{theorem}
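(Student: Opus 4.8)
The quantity to bound is $M := (D+A_1)^{-1}(A_2 - S)$; I will estimate $\norm{M}_\infty$ directly (a bound $\norm{M}_\infty<1$ also yields $\rho(M)<1$, as needed for the Neumann expansions). First I pin down the structure of $D+A_1$. Since $D$ is diagonal and $e_N^T A_1 = 0$, the $N$-th row of $D+A_1$ equals $D_{NN}e_N^T$, so invertibility forces $D_{NN}<0$ and makes $D+A_1$ block upper triangular for the splitting of the indices into $\{1,\dots,N-1\}$ and $\{N\}$. Off the diagonal, $-(D+A_1)$ has the entries $-[A_1]_{ij}\le 0$, while $(D+A_1+A_2)e=0$ gives $-D_{ii}=(A_1 e)_i+(A_2 e)_i\ge (A_1 e)_i$ for all $i$; hence $-(D+A_1)$ is an invertible, diagonally dominant matrix with positive diagonal and non-positive off-diagonal entries, i.e.\ a nonsingular $M$-matrix. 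Running the regular-splitting / Neumann-series argument from the proof of Lemma~\ref{lem:diagdomsign} on $-(D+A_1)$ gives $(D+A_1)^{-1}\le 0$, and in addition the leading $(N-1)\times(N-1)$ block $-(\hat D+\hat A_1)^{-1}$ of $-(D+A_1)^{-1}$ is $\ge 0$ with strictly positive diagonal (here $\hat{\,\cdot\,}$ denotes the leading principal submatrix).

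Next I use the rank-one structure of $S=(A_1+A_2)e_Ne_N^T$: the matrix $A_2-S$ coincides with $A_2$ in every column $j\ne N$ and has $N$-th column $-A_1 e_N\le 0$, so $A_2-S = A_2'-S'$ with $A_2':=A_2(I-e_Ne_N^T)\ge 0$ and $S':=(A_1 e_N)e_N^T\ge 0$, which have disjoint column supports. Consequently $M=(D+A_1)^{-1}A_2'-(D+A_1)^{-1}S'$, the first term $\le 0$ and vanishing in column $N$, the second $\ge 0$ and supported only there; therefore $|M| = -(D+A_1)^{-1}(A_2'+S')$ and $\norm{M}_\infty=\norm{(D+A_1)^{-1}(A_2'+S')e}_\infty$. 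Since $(A_2'+S')e = A_2 e - A_2 e_N + A_1 e_N$, substituting the zero-row-sum identity $A_2 e = -(D+A_1)e$ together with $(D+A_1)^{-1}A_1 = I-(D+A_1)^{-1}D$ collapses this to
\[
  |M|\,e = e - e_N + (D+A_1)^{-1}(D+A_2)e_N .
\]

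The vector $(D+A_2)e_N$ has $N$-th component $D_{NN}+[A_2]_{NN}=0$; let $w\ge 0$ be the vector of its first $N-1$ entries (equivalently, the first $N-1$ entries of $A_2 e_N$). Pushing this through the block-triangular form of $(D+A_1)^{-1}$ yields, componentwise, $(|M|e)_i = 1-(Gw)_i$ for $i<N$ and $(|M|e)_N = 0$, where $G:=-(\hat D+\hat A_1)^{-1}\ge 0$; since $|M|e\ge 0$ this also shows $(Gw)_i\le 1$, and
\[
  \norm{M}_\infty = 1 - \min_{1\le i<N}(Gw)_i .
\]
It remains to prove that $(Gw)_i>0$ for every $i<N$, and this is where the hypothesis $\min_{i<N}A_{iN}>0$ enters: the splitting places the whole $N$-th column of $A$ into $A_2$, so $w_i=A_{iN}>0$ for each $i<N$; since $G\ge 0$ and $\mathrm{diag}(G)>0$ (first step), $(Gw)_i\ge G_{ii}w_i>0$, whence $\norm{M}_\infty<1$.

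The main obstacle is this final reduction. The algebra leading to $|M|\,e = e - e_N + (D+A_1)^{-1}(D+A_2)e_N$ is only careful bookkeeping with block (Kronecker-free) manipulations, but the passage from the scalar assumption $\min_{i<N}A_{iN}>0$ to the componentwise positivity of the vector $Gw$ is the real content: it rests both on the $M$-matrix fact $\mathrm{diag}(G)>0$ and on the identity $w_i = A_{iN}$, which holds because the splitting routes the inflow to the absorbing state through $A_2$ rather than $A_1$; without this last feature the argument would only give $\norm{M}_\infty\le 1$.
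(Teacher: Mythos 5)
Your proof is correct and follows essentially the same route as the paper's: the same observation on the column sign pattern of $M$ reducing $\norm{M}_\infty$ to $\norm{\,|M|e\,}_\infty = \norm{M(e-2e_N)}_\infty$, and the same key identity $|M|e = e - e_N + (D+A_1)^{-1}(D+A_2)e_N$. The only difference is in the final positivity step, where your use of the block-triangular structure of $D+A_1$ and of the strictly positive diagonal of the inverse of its leading $M$-matrix block is in fact a tighter justification than the paper's appeal to entrywise strict negativity of $(D+A_1)^{-1}$ (both arguments rely on the same implicit reading of the hypothesis $\min_{i<N}A_{iN}>0$ as $[A_2]_{iN}>0$ for $i<N$).
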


\begin{proof}
	Let us denote with $M := (D + A_1)^{-1} (A_2 - S)$. All the columns of this matrix are
	non-positive (see Lemma~\ref{lem:diagdomsign}), with the only exception of the last one, which is non-negative. This is a
	consequence of the fact that $(D + A_1)^{-1}$ is non-positive, and $(A_2 - S)$ has 
	the first $N - 1$ columns with positive entries, and the last one with negative ones. 
	
	Therefore, it is clear that we have $\norm{M}_\infty = \norm{M(e - 2e_N)}_\infty$, 
	and the vector $M(e - 2e_N)$ is element-wise non-positive by construction. 
	We have
	\[
	  M(e - 2e_N) = (D + A_1)^{-1} (A_2 - S) (e - 2e_N). 
	\]
	Using the relations $(D + A_1)^{-1} A_2 e = (D + A_1)^{-1} (D + A_1 + A_2) e - e = -e$ and 
	$Se = Se_N = (A_1 + A_2)e_N$ we get 
	\begin{align*}
	  M(e - 2e_N) &= -e -2 (D + A_1)^{-1} A_2 e_N + (D + A_1)^{-1} (A_1 + A_2) e_N\\
	             &= -e + (D + A_1)^{-1} (A_1 - A_2) e_N \\
	             &=  -e + e_N - (D + A_1)^{-1} (D + A_2) e_N. 
	\end{align*}
	By construction, we know that the last row of $D + A_2$ is equal to $-e_N^T A_1$ 
	and is therefore zero. The first $N - 1$ entries in $(D + A_2) e_N$ are taken from
	$A_2$ and therefore they are (strictly) positive. Since $(D + A_1)^{-1}$ is 
	entry-wise strictly negative, we have that $v = -(D + A_1)^{-1} (D + A_2) e_N$
	has the first $N - 1$ components strictly positive. Therefore, we have 
	that $M(e - 2e_N) > -1$ element-wise, and on the other hand we knew that 
	$M(e - 2e_N)$ is non-positive. This implies that $\norm{M}_\infty < 1$, 
	as claimed. 
      \end{proof}
  
  \begin{remark}
  	The previous results are closely related with the theory of $M$-matrices. Indeed, 
  	the decomposition $A = M - N$ with $M = (D + A_1)$ and $N = -A_2 + S$ is almost
  	a regular splitting, because $M^{-1}$ is negative, and $N$ is positive, with
  	the only exception of the last column. If it were a regular splitting, 
  	then this would automatically imply that $\rho(M^{-1}N) < 1$ -- in view
  	of the theory of nonsingular $M$-matrices \cite{plemmons1994nonnegative}. 
  \end{remark}

      \begin{lemma} \label{lem:positive}
        Let $y$ be any vector. Then, using the notation
        of Theorem~\ref{thm:neumann},
        $(D + A_1)^{-1}(A_2 - S)y$ has the
        last component equal to zero. Moreover, let $y_0$
        be any vector, and define
        \[
          y_{\ell+1} = -(D + A_1)^{-1}(A_2 - S) y_\ell, \qquad \ell \geq 1. 
        \]
        Then, if $y_\ell$ for $\ell > 0$ is non-negative we have
        $y_{\ell'} \geq 0$ for any $\ell' \geq \ell$. 
      \end{lemma}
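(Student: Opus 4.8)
The plan is to establish the two assertions in order, since the second one follows quickly once the first is available. For the first, I would prove the slightly stronger fact that $e_N^T (D+A_1)^{-1}(A_2-S)$ is the zero row vector; the claim then follows by multiplying on the right by an arbitrary $y$. The key observation is that the hypothesis $e_N^T A_1 = 0$ means the last row of $A_1$ vanishes, so the last row of $D+A_1$ is just the last row of the diagonal matrix $D$, i.e. $e_N^T(D+A_1) = D_{NN}\,e_N^T$. Here $D_{NN}\neq 0$, because otherwise $D+A_1$ would have a zero row and could not be invertible, contradicting the hypotheses of Theorem~\ref{thm:neumann}; hence $e_N^T (D+A_1)^{-1} = D_{NN}^{-1} e_N^T$. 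It remains to see that $e_N^T (A_2 - S) = 0$: from $e_N^T(D+A_1+A_2) = 0$ and $e_N^T A_1 = 0$ we get $e_N^T A_2 = -e_N^T D = -D_{NN}e_N^T$, and since $S = (A_1+A_2)e_N e_N^T$ is supported on its last column, $e_N^T S = \bigl(e_N^T(A_1+A_2)e_N\bigr)e_N^T = (A_2)_{NN}\,e_N^T = -D_{NN}\,e_N^T$ (using again that the last row of $A_1$ is zero). The two contributions cancel, which proves the first part.

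For the second part I would combine the first part with the sign pattern of $M := (D+A_1)^{-1}(A_2-S)$ already recorded in the proof of Theorem~\ref{thm:neumann}: its first $N-1$ columns are non-positive (because $(D+A_1)^{-1}\leq 0$ by Lemma~\ref{lem:diagdomsign}, and the first $N-1$ columns of $A_2-S$ coincide with those of $A_2\geq 0$, as $S$ has nonzero entries only in its last column), while its last column is non-negative. Consequently the iteration matrix $-M = -(D+A_1)^{-1}(A_2-S)$ has non-negative first $N-1$ columns and a non-positive last column. Now every iterate $y_\ell$ with $\ell\geq 1$ is of the form $-(D+A_1)^{-1}(A_2-S)y_{\ell-1}$, so by the first part its last entry is zero. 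Therefore, if $y_\ell\geq 0$ for some $\ell\geq 1$, then in the product $y_{\ell+1} = -M y_\ell$ the (possibly troublesome) last column of $-M$ is multiplied by $(y_\ell)_N = 0$ and drops out, leaving a non-negative combination of the non-negative first $N-1$ columns; hence $y_{\ell+1}\geq 0$. An induction on $\ell'\geq\ell$ then gives $y_{\ell'}\geq 0$ for all such $\ell'$.

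I expect the only mildly delicate step to be the first part — in particular, arguing that $D_{NN}\neq 0$ and keeping straight which rows and columns of $A_1$, $A_2$ and $S$ vanish — after which $e_N^T(D+A_1)^{-1} = D_{NN}^{-1}e_N^T$ makes everything routine. The second part is then essentially immediate; the one point worth emphasizing is that the restriction $\ell > 0$ is exactly what guarantees $(y_\ell)_N = 0$, which in turn is what neutralizes the single non-positive column of the iteration matrix.
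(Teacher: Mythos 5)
Your proof is correct and follows essentially the same route as the paper's: the first claim is obtained from the identity $e_N^T(D+A_1)^{-1}(A_2-S)=0$ via the same computation ($e_N^T(D+A_1)^{-1}=D_{NN}^{-1}e_N^T$ together with $e_N^TA_2=-e_N^TD$ and $e_N^TS=-D_{NN}e_N^T$), and your column-sign argument for the second claim is just a repackaging of the paper's observation that once $(y_\ell)_N=0$ one has $Sy_\ell=0$, hence $(A_2-S)y_\ell=A_2y_\ell\geq 0$, and then $-(D+A_1)^{-1}\geq 0$ does the rest. Your explicit remark that $D_{NN}\neq 0$ (otherwise $D+A_1$ would have a zero row) is a small but welcome addition that the paper leaves implicit.
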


      \begin{proof}
      	We start showing that $e_N^T(D + A_1)^{-1} (A_2 - S) = 0$, 
      	which proves the first claim. We have 
      	\begin{align*}
      		e_N^T(D + A_1)^{-1} (A_2 - S) &= D_{NN}^{-1} e_N^T (A_2 - S)
      		= e_N^T A_2 - e_N^T A_1 e_N e_N^T - e_N^T A_2 e_N e_N^T \\
      		&= e_N^T A_2 - e_N^T A_2 e_N e_N^T
      		= e_N^T A_2 (I - e_N e_N^T) \\
      		&= - e_N^T D (I - e_N e_N^T) = 0, 
      	\end{align*}
      	where we have used the properties $e_N^T (D + A_1 + A_2) = e_N^T + A_1 = 0$, and the definition of $S = (A_1 + A_2) e_N e_N^T$. 
      	
      	Assume now that $e_N^T y = 0$. 
      	Then, $z := (A_2 - S) y = A_2 y \geq 0$, since $Sy = 0$. Moreover, $(D + A_1)^{-1}$ is non-positive in view of Lemma~\ref{lem:diagdomsign}, 
      	and therefore $-(D + A_1)^{-1} z \geq 0$, concluding the proof. 
      \end{proof}

      \begin{remark}
        Note that choosing $\gamma = -1$ in the notation of \Cref{lem:gamma} provides
        a starting vector for the Neumann iteration \Cref{eq:neumann} that satisfies
        the hypotheses of \Cref{lem:positive}. Therefore, in this case the iteration
        to approximate the $\mtta$\ is monotonically increasing, and at the step $\ell$
        gives a lower bound for the final value of the $\mtta$.
      \end{remark}

\begin{theorem} \label{thm:constructivesplitting}
	Let $Q = \Delta + R + W$ be an infinitesimal generator of a Markov 
	chain as described in \eqref{eq:descriptorQ}, with 
	\[
	R = R^{(1)} \oplus \ldots \oplus R^{(k)}, \qquad 
	\Delta = -\mathrm{diag}( (W + R) e ),
      \] 
      and $\gamma \geq \norm{\Delta}_\infty$. Then, if we define
      $D := - \gamma I, A_1 = R$, 
      and $A_2 = W + (\Delta - \gamma I)$,
      these matrices satisfy the hypotheses of Theorem~\ref{thm:neumann}
      and there exists $\alpha_j, \beta_j$ such that
          \[
            D + A_1 = \left(R^{(1)} - \frac{\gamma}{k} I\right) \oplus 
            \left(R^{(2)} - \frac{\gamma}{k} I\right)
              \oplus \cdots \oplus \left(R^{(k)} - \frac{\gamma}{k} I\right),
          \]
     and therefore
     \[
       (D + A_1)^{-1} (A_2 - S) \approx 
       \sum_{j = 1}^\ell \alpha_j \left(
         e^{\beta_j (R_1  - \frac{\gamma}{k} I)} \otimes \ldots \otimes e^{\beta_j (R_k  - \frac{\gamma}{k} I)} 
       \right) (A_2 - S). 
     \]
   \end{theorem}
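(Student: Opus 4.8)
The statement packages three claims: (i) the matrices $D := -\gamma I$, $A_1 := R$, and $A_2 := (\Delta + \gamma I) + W$ (chosen so that $D + A_1 + A_2 = Q$, matching \eqref{eq:descriptorQ}) satisfy the hypotheses of Theorem~\ref{thm:neumann}; (ii) the identity $D + A_1 = \bigoplus_i\bigl(R^{(i)} - \tfrac{\gamma}{k}I\bigr)$; and (iii) the exponential-sum approximation of $(D+A_1)^{-1}(A_2-S)$. For (i) the plan is to go through the hypotheses of Theorem~\ref{thm:neumann} in turn, which is mostly unwinding definitions. The matrix $D=-\gamma I$ is diagonal and, since $\gamma \ge \norm{\Delta}_\infty \ge 0$, non-positive; $A_1 = \bigoplus_i R^{(i)} \ge 0$ because a Kronecker sum of entrywise non-negative matrices is non-negative; $A_2 \ge 0$ because $W \ge 0$ and $\Delta$ is diagonal with entries $-[(R+W)e]_i \in [-\norm{\Delta}_\infty,0]$, so $\Delta + \gamma I \ge 0$ precisely by the choice $\gamma \ge \norm{\Delta}_\infty$. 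The relations $e_N^T(D+A_1+A_2) = e_N^T Q = 0$ and $(D+A_1+A_2)e = Qe = 0$ hold because $Q$ is a generator (zero row sums by the definition of $\Delta$) whose $N$-th state is absorbing (zero last row), and $S = (A_1+A_2)e_N e_N^T$ is exactly as prescribed.

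This leaves the three conditions that genuinely use the model rather than pure algebra: $e_N^T A_1 = e_N^T R = 0$, i.e.\ the last state of each component is locally absorbing; invertibility of $D + A_1 = R - \gamma I$, which I would obtain from $\gamma \ge \norm{R}_\infty \ge \rho(R)$ — the bound on $\gamma$ being one-sided, one may take it strict so that $\gamma \notin \Lambda(R)$; and $\min_{1\le i<N} A_{iN} = \min_{i<N} Q_{iN} > 0$, the requirement that every transient state reach $N$ in one step. The first and third are assumptions on the SAN, inherited here rather than derived. Claim (ii) I would prove using two elementary properties of the Kronecker sum: additivity in the summands, $(X_1\oplus\cdots\oplus X_k)+(Y_1\oplus\cdots\oplus Y_k) = (X_1+Y_1)\oplus\cdots\oplus(X_k+Y_k)$, and the collapse of scalar blocks, $\tfrac{\gamma}{k}I\oplus\cdots\oplus\tfrac{\gamma}{k}I = \gamma I$ (the Kronecker sum of $c_i I_{n_i}$ equals $(\sum_i c_i)I$). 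Combined with $R = R^{(1)}\oplus\cdots\oplus R^{(k)}$ and $D = -\gamma I$ these give $D + A_1 = \bigoplus_i R^{(i)} - \bigoplus_i \tfrac{\gamma}{k}I = \bigoplus_i\bigl(R^{(i)} - \tfrac{\gamma}{k}I\bigr)$.

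For (iii) I would first locate the spectrum of $B := -(D+A_1) = \gamma I - R$: since $\Lambda(R) = \{\sum_i \nu^{(i)} : \nu^{(i)}\in\Lambda(R^{(i)})\}$ and $\rho(R) \le \norm{R}_\infty = \sum_i \norm{R^{(i)}}_\infty \le \norm{\Delta}_\infty \le \gamma$ (again with room to be strict), every eigenvalue of $B$ has strictly positive real part and the spectrum lies in a bounded subset of the open right half-plane; if the local chains are such that the $R^{(i)}$ are diagonalizable with real spectrum, this is an interval $[a,b]\subset(0,\infty)$, rescalable to $[1,\kappa]$. I would then invoke the existence of an exponential sum $\tfrac1x \approx \sum_{j=1}^\ell \alpha_j e^{-\beta_j x}$ of prescribed accuracy on that interval (via \cite{braess2005approximation}, with the operator error controlled through \Cref{lem:error}, whose constant $\norm{V}\norm{V^{-1}}$ factors as $\prod_i \norm{V_i}\norm{V_i^{-1}}$ over the modes), so that $(D+A_1)^{-1} = -B^{-1} \approx \sum_j \alpha_j e^{\beta_j(D+A_1)}$, the sign being absorbed into the $\alpha_j$. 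Finally, by the lemma on the matrix exponential of a Kronecker sum together with claim (ii), $e^{\beta_j(D+A_1)} = e^{\beta_j\bigoplus_i(R^{(i)} - \gamma/k\,I)} = \bigotimes_i e^{\beta_j(R^{(i)} - \gamma/k\,I)}$; multiplying on the right by $(A_2-S)$ yields the displayed formula.

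The step I expect to be the main obstacle is the spectral bookkeeping in (iii): one must ensure that $\Lambda(-(D+A_1))$ really lies in a region for which a convergent exponential sum is available — a positive real interval in the reversible/symmetric case, a sector otherwise, which is exactly where the field-of-values variant referenced after \Cref{lem:error} is needed — and, crucially, that it is bounded away from $0$ and from the imaginary axis, which needs slightly more than $\gamma \ge \norm{\Delta}_\infty$; one also has to keep the truncation error under control through the product $\prod_i \norm{V_i}\norm{V_i^{-1}}$ of per-mode eigenvector condition numbers, since this is the quantity that decides whether the matrix approximation inherits the accuracy of the scalar one. By comparison, claim (i) is essentially definition-chasing modulo the two structural assumptions on the model, and claim (ii) is a one-line Kronecker-algebra identity.
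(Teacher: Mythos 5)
The paper provides no explicit proof of Theorem~\ref{thm:constructivesplitting}: it is stated as a constructive consequence of the preceding lemmas with the argument left implicit, so your write-up is filling an omission rather than duplicating a printed proof. Your reasoning is correct on all three fronts, and you rightly identify a sign typo in the stated definition of $A_2$: as printed, $A_2 = W + (\Delta - \gamma I)$ would give $D + A_1 + A_2 = Q - 2\gamma I \neq Q$ and would not be entrywise non-negative, whereas the intended $A_2 = W + (\Delta + \gamma I)$ (which you use) restores both $D+A_1+A_2=Q$ and $A_2\ge 0$, consistently with $\gamma \ge \norm{\Delta}_\infty$. Your check of the hypotheses of Theorem~\ref{thm:neumann}, the one-line Kronecker-sum identity $D + A_1 = \bigoplus_i\bigl(R^{(i)} - \tfrac{\gamma}{k}I\bigr)$ via $\bigoplus_i \tfrac{\gamma}{k}I_{n_i} = \gamma I$, and the exponential-sum step through the lemma on $e^{\mathcal A}$ for a Kronecker sum together with Lemma~\ref{lem:error} are all consistent with how the paper's machinery is intended to be applied. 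Your two caveats are also well placed and worth making explicit, since the theorem statement silently absorbs them: $e_N^T R = 0$ and $\min_{i<N} Q_{iN} > 0$ are structural assumptions on the SAN (the absorbing product state has no outgoing local transitions, and every transient state can reach it in one step) rather than consequences of the stated hypotheses; and the applicability of the exponential-sum bound needs the spectrum (or, in the non-diagonalizable case, the field of values) of $\gamma I - R$ to be strictly separated from zero, which the chain $\gamma \ge \norm{\Delta}_\infty \ge \norm{R}_\infty \ge \rho(R)$ delivers only non-strictly --- in the paper's case study this is moot because each $R^{(i)}$ is strictly upper triangular so $\rho(R)=0$, but in general it is a genuine, if mild, extra requirement on $\gamma$.
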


\begin{remark}
	We note that the choice of $\gamma$ allows to control the condition
	number of the matrix $D + A_1$; our experience shows that larger
	values for $\gamma$ (which give lower condition numbers), 
	provide slower convergence with $\rho$ approaching $1$, 
	but also lower TT-ranks during the 
	Neumann iteration. This choice is discussed in further
	detail in Section~\ref{sec:gamma}. 
\end{remark}

\section{Computational remarks}
\label{sec:remarks}

In this section we report a few computational remarks concerning
our implementation. The variants of the Neumann
expansion described in Algorithm~\ref{alg:neumann2} and \ref{alg:neumannt}
have been implemented in the toolbox \texttt{kaes}, which is
freely available\footnote{\url{https://github.com/numpi/kaes/}.}. The
toolbox is implemented in MATLAB, and given cell-arrays \texttt{R}, 
\texttt{W} containing
the factors defining $Q$, one may compute the value of the MTTA by calling 
\texttt{m~=~eval\_measure('inv', pi0, r, R, W)} where \texttt{pi0}
and \texttt{r} contain the initial probability distribution and
the reward vector. The function has some optional parameters, 
that allows to tune the required tolerance and the value of 
$\gamma$. 

A few considerations can be helpful in trying to obtain maximum 
performances from the implementation. 

\subsection{Ordering of the subsystems}

Since the TT representation represents the interaction between between
the subsystem $i$ and $i+1$ in each carriage, we have found that it
is beneficial to reorder the topology so that few nodes are linked 
to far ones. 

In particular, given the adjacency matrix $\mathcal T$ that 
represents the connection graph (i.e., $\mathcal T_{ij}) = 1$ if and only
if there is an edge in graph from the node $i$ to the node $j$), it 
can be helpful to reorder the subsystems to make this matrix
as banded as possible. To this end, we have employed the reverse
Cuthill-McKee ordering implemented in MATLAB in the function
\texttt{symrcm}.

\subsection{The choice of $\gamma$} \label{sec:gamma}

The choice of the parameter $\gamma$ in Theorem~\ref{thm:constructivesplitting} can have important effects
on the performance of the algorithm. 

We have verified that choosing
$\gamma$ relatively large, for instance $\gamma \gg \norm{\Delta}_\infty$, 
can be helpful. This reduces the conditioning of the matrix to invert
to a small number, and thus very few exponential sums are needed 
to achieve a very high accuracy.  More importantly, this helps
to keep the TT-ranks low during the iteration, especially when applying 
\eqref{eq:neumann2}, which in turn suffers very mildly from having
the spectral radius close to $1$ (thanks to the
quadratic convergence rate). On the other hand, when applying 
the linearly convergence iteration \eqref{eq:neumann}, the
minimal choice $\gamma = \norm{\Delta}_\infty$ is often advisable. 
Indeed, for this iteration a $\rho$ close to $1$ is much more harmful, 
and in general it can be quite memory efficient (it only work
with compressed vectors). 

We do not have a ``universal recipe'' for these choices, so 
it might be helpful to do some preliminary parameter tuning on small
problems of a given class before tackling the large scale cases. We
plan to further investigate this matter in the future. 

\section{Case study}
\label{sec:caseStudy}

Consider a cyber-physical system comprising $k$ components,
consisting each of a mechanical object and a Monitoring and Control Unit (MCU).
The mechanical objects are independent one from the other 
whereas the working status of the MCU software on component $j$ 
depends on data produced by local sensors
and can depend also on data coming from the MCU of component $i$.  
Thus, it is possible to define a topology of interactions among
component MCUs: define $\mathcal T$ the $k\times k$ matrix as
$\mathcal T(i,j)=1$ if $i=j$ or the $j$-th MCU consumes data produced by the
$i$-th MCU.
At every time instant, the MCU and the mechanical object
on each component can be \emph{working} or \emph{failed}. 
If the mechanical object on component $i$ fails then instantaneously
also the MCU on component $i$ fails.
If $\mathcal T(i,j)=1$ then the failure of the $i$-th MCU implies
an instantaneous failure of the $j$-th MCU.
The failure time of the software running on the $i$-th MCU is assumed
to be exponentially distributed with rate $\lambda^s_i$. 

The MCU on component $i$ can modify the behaviour of the mechanical object
on component $i$, so the failure time of the mechanical object 
on component $i$ is exponentially distributed with rate $^\bullet\lambda^h_i$
if the MCU on component $i$ is working, and $^\circ\lambda^h_i$ if the MCU
is already failed.
No repair is considered.

We are interested in evaluating the Mean Time to System Failure, where
the system is considered failed when all the mechanical objects are failed.
\begin{figure}
	\centering
	\begin{tikzpicture}[node distance=6em,>=latex',bend angle=45,
	pre/.style={<-,shorten <=0pt,>=latex'},
	post/.style={->,shorten >=0pt,>=latex'}]
	
	\node[draw=none] (origin) at (0,0) {};
	
	\node[circle, draw=black, minimum size=3em] (m1s1) 
	at ($(origin)+(0em,0em)$) {};
	\node[draw=none] (m1s1s) at ($(m1s1)+(0,.5em)$) {s$_1$ w};
	\node[draw=none] (m1s1h) at ($(m1s1)+(0,-.5em)$) {h$_1$ w};
	\node[circle, draw=black, minimum size=3em] (m1s2) 
	at ($(m1s1)+(-3em,-4em)$) {};
	\node[draw=none] (m1s2s) at ($(m1s2)+(0,.5em)$) {s$_1$ f};
	\node[draw=none] (m1s2h) at ($(m1s2)+(0,-.5em)$) {h$_1$ w};
	\node[circle, draw=black, minimum size=3em] (m1s3) 
	at ($(m1s1)+(3em,-4em)$) {};
	\node[draw=none] (m1s3s) at ($(m1s3)+(0,.5em)$) {s$_1$ f};
	\node[draw=none] (m1s3h) at ($(m1s3)+(0,-.5em)$) {h$_1$ f};
	\path (m1s1) edge [->] node [right] {$\lambda^s_1$} (m1s2);
	\path (m1s2) edge [->] node [below] {$^\circ\lambda^h_1$} (m1s3);
	\path (m1s1) edge [->] node [right] {$^\bullet\lambda^h_1$} (m1s3);
	\draw ($(m1s2)+(-2em,-3em)$) rectangle ($(m1s1)+(5em,3em)$);
	\node[draw=none] at ($(m1s1)+(-2em,3.5em)$) {Component 1};
	
	\node[circle, draw=black, minimum size=3em] (m2s1) 
	at ($(origin)+(12em,0em)$) {};
	\node[draw=none] (m2s1s) at ($(m2s1)+(0,.5em)$) {s$_2$ w};
	\node[draw=none] (m2s1h) at ($(m2s1)+(0,-.5em)$) {h$_2$ w};
	\node[circle, draw=black, minimum size=3em] (m2s2) 
	at ($(m2s1)+(-3em,-4em)$) {};
	\node[draw=none] (m2s2s) at ($(m2s2)+(0,.5em)$) {s$_2$ f};
	\node[draw=none] (m2s2h) at ($(m2s2)+(0,-.5em)$) {h$_2$ w};
	\node[circle, draw=black, minimum size=3em] (m2s3) 
	at ($(m2s1)+(3em,-4em)$) {};
	\node[draw=none] (m2s3s) at ($(m2s3)+(0,.5em)$) {s$_2$ f};
	\node[draw=none] (m2s3h) at ($(m2s3)+(0,-.5em)$) {h$_2$ f};
	\path (m2s1) edge [->] node [right] {$\lambda^s_2$} (m2s2);
	\path (m2s2) edge [->] node [below] {$^\circ\lambda^h_2$} (m2s3);
	\path (m2s1) edge [->] node [right] {$^\bullet\lambda^h_2$} (m2s3);
	\path (m2s1) edge [bend right,->, dotted] node [left] {$\lambda^s_1$} (m2s2);
	\draw ($(m2s2)+(-2em,-3em)$) rectangle ($(m2s1)+(5em,3em)$);
	\node[draw=none] at ($(m2s1)+(-2em,3.5em)$) {Component 2};
	
	\node[circle, draw=black, minimum size=3em] (m3s1) 
	at ($(origin)+(0em,-12em)$) {};
	\node[draw=none] (m3s1s) at ($(m3s1)+(0,.5em)$) {s$_2$ w};
	\node[draw=none] (m3s1h) at ($(m3s1)+(0,-.5em)$) {h$_2$ w};
	\node[circle, draw=black, minimum size=3em] (m3s2) 
	at ($(m3s1)+(-3em,-4em)$) {};
	\node[draw=none] (m3s2s) at ($(m3s2)+(0,.5em)$) {s$_2$ f};
	\node[draw=none] (m3s2h) at ($(m3s2)+(0,-.5em)$) {h$_2$ w};
	\node[circle, draw=black, minimum size=3em] (m3s3) 
	at ($(m3s1)+(3em,-4em)$) {};
	\node[draw=none] (m3s3s) at ($(m3s3)+(0,.5em)$) {s$_2$ f};
	\node[draw=none] (m3s3h) at ($(m3s3)+(0,-.5em)$) {h$_2$ f};
	\path (m3s1) edge [->] node [right] {$\lambda^s_3$} (m3s2);
	\path (m3s2) edge [->] node [below] {$^\circ\lambda^h_3$} (m3s3);
	\path (m3s1) edge [->] node [right] {$^\bullet\lambda^h_3$} (m3s3);
	\path (m3s1) edge [bend right,->, dotted] node [left] {$\lambda^s_1$} (m3s2);
	\draw ($(m3s2)+(-2em,-3em)$) rectangle ($(m3s1)+(5em,3em)$);
	\node[draw=none] at ($(m3s1)+(-2em,3.5em)$) {Component 3};
	
	\node[circle, draw=black, minimum size=3em] (m4s1) 
	at ($(origin)+(12em,-12em)$) {};
	\node[draw=none] (m4s1s) at ($(m4s1)+(0,.5em)$) {s$_2$ w};
	\node[draw=none] (m4s1h) at ($(m4s1)+(0,-.5em)$) {h$_2$ w};
	\node[circle, draw=black, minimum size=3em] (m4s2) 
	at ($(m4s1)+(-3em,-4em)$) {};
	\node[draw=none] (m4s2s) at ($(m4s2)+(0,.5em)$) {s$_2$ f};
	\node[draw=none] (m4s2h) at ($(m4s2)+(0,-.5em)$) {h$_2$ w};
	\node[circle, draw=black, minimum size=3em] (m4s3) 
	at ($(m4s1)+(3em,-4em)$) {};
	\node[draw=none] (m4s3s) at ($(m4s3)+(0,.5em)$) {s$_2$ f};
	\node[draw=none] (m4s3h) at ($(m4s3)+(0,-.5em)$) {h$_2$ f};
	\path (m4s1) edge [->] node [right] {$\lambda^s_4$} (m4s2);
	\path (m4s2) edge [->] node [below] {$^\circ\lambda^h_4$} (m4s3);
	\path (m4s1) edge [->] node [right] {$^\bullet\lambda^h_4$} (m4s3);
	\path (m4s1) edge [bend right=20,->, dotted] node [left] {$\lambda^s_3$} (m4s2);
	\path (m4s1) edge [bend right=65,->, dotted] node [left] {$\lambda^s_2$} (m4s2);
	\draw ($(m4s2)+(-2em,-3em)$) rectangle ($(m4s1)+(5em,3em)$);
	\node[draw=none] at ($(m4s1)+(-2em,3.5em)$) {Component 4};
	
	\node (topology) 
	at ($.5*(m2s3)+.5*(m4s1)+(11em,0)$) {
	  $\begin{bmatrix}
	    1& 1& 1& 0\\
	    0& 1& 0& 1\\
	    0& 0& 1& 1\\
	    0& 0& 0& 1\\
	  \end{bmatrix}$
	};
	\node[draw=none,align=center] at ($(topology)+(0,4em)$) {Topology of\\interdependencies:};
	\node[draw=none] at ($(topology)+(-4em,0em)$) {$\mathcal T=$};
	
	\end{tikzpicture}
	\caption{\label{fig:caseStudy}Example SAN for the case study
	where there are $4$ components. Dotted arrows are 
	synchronization transitions.}
\end{figure}
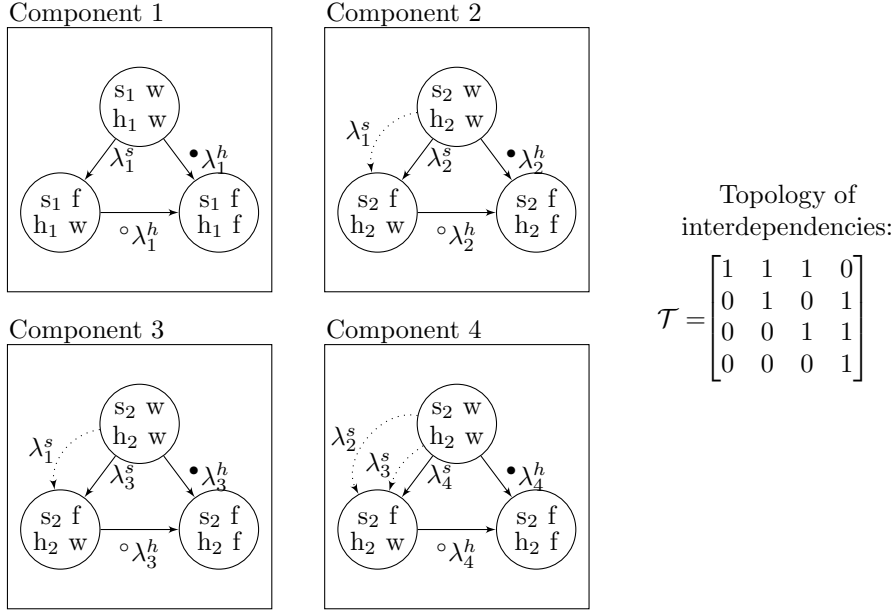
\Cref{fig:caseStudy} depicts the SAN model for a simple case where there are
$4$ components.
In particular, the state of component $i$, represented with a circle,
is defined by software status (s$_i$ is \texttt{w} if working,
\texttt{f} if failed) and the hardware status (h$_i$ is \texttt{w} if working,
\texttt{f} if failed). Transitions can be local or synchronized, represented as
arrows and dotted arrows, respectively, and labelled by their rate.  
Each component model has $3$ states, so that $|\mathcal{PS}|=3^k$,
and the cardinality of $\mathcal{RS}$ depends on $\mathcal T$. 
Notice that, if $\mathcal T=I$ then $\mathcal{RS}=\mathcal{PS}$, whereas 
the size of $\mathcal{RS}$ decreases as the number of interactions increases. 
The local and synchronization contribution matrices are then obtained
as in~\Cref{eq:RW}. 
\begin{align}
R^{(i)} &= 
\begin{bmatrix}
0& 0& ^\bullet\lambda^h_i\\
0& 0& ^\circ\lambda^h_i\\
0& 0& 0
\end{bmatrix}\label{eq:RW}, & 
W^{(t_j,i)} &= 
\begin{cases}
	\begin{bmatrix}
	0& \lambda^s_j& 0\\
	0& 0& 0\\
	0& 0& 0
	\end{bmatrix} &\text{if }i=j\\
	\begin{bmatrix}
	0& 1& 0\\
	0& 1& 0\\
	0& 0& 1
	\end{bmatrix} & {\text{if }i\neq j\text{ and } \mathcal T(j,i)=1}\\
	\begin{bmatrix}
	1& 0& 0\\
	0& 1& 0\\
	0& 0& 1
	\end{bmatrix} &\text{otherwise}
\end{cases}
\end{align}

\section{Experimental results}
\label{sec:results}

The case study model presented in~\Cref{sec:caseStudy}
has been implemented in MATLAB~\cite{MATLAB18} and studied 
applying the method discussed so far.
In particular, we consider the following set of parameters:
\[
^{\bullet}\lambda^h_i=\frac{i}{10},\quad
^{\circ}\lambda^h_i=i,\quad
\lambda^s_i=i,
\]
and the topology $\mathcal T$ has been chosen at random
with the following constraints:
\begin{itemize}
	\item each component has impact on itself, i.e., 
	$\mathcal T(i,i)=1$,
	\item For each $i \neq j$, the entry $\mathcal T(i,j)$ is set to $1$ with
	probability $\frac{1}{2k}$.
\end{itemize}

More precisely, the sparse
matrix $\mathcal T$ has been generated using the MATLAB command \texttt{T = (speye(k)+sprand(k,k,.5/k)) > 0}. 

\begin{table}[t]
	\centering 	\small
	\pgfplotstabletypeset[%
	column type=c,
	every head row/.style={
		before row={
			\toprule
			&\multicolumn{1}{c|}{}
			& \multicolumn{2}{c|}{Algorithm~\ref{alg:neumann2}} & 
			\multicolumn{2}{c|}{Algorithm~\ref{alg:neumannt}} & 
			\multicolumn{3}{c|}{AMEn} & 
			\multicolumn{3}{c}{DMRG} \\ 
		}, 
		after row = \midrule 
		},
	every last row/.style={after row=\bottomrule},		
	sci zerofill,
	columns={0,1,2,5,6,9,10,12,13,14,16},
	columns/0/.style={dec sep align={c|},column type/.add={}{|},column name=$k$,fixed},
	columns/1/.style={column name=Avg},
	columns/2/.style={column name=Max,column type/.add={}{|}},
	columns/5/.style={column name=Avg},
	columns/6/.style={column name=Max,column type/.add={}{|}},
	columns/9/.style={column name=Avg},
	columns/10/.style={column name=Max},
	columns/12/.style={column name=OOM,column type/.add={}{|},postproc cell content/.append code={
			\pgfkeysalso{@cell content/.add={}{\%}}%
	}},
	columns/13/.style={column name=Avg},
	columns/14/.style={column name=Max},
	columns/16/.style={column name=OOM,postproc cell content/.append code={
			\pgfkeysalso{@cell content/.add={}{\%}}%
	}},
	]{results.dat}
	
	\caption{Average and maximum memory usage (in GB) required by Algorithm~\ref{alg:neumannt} and 
	\ref{alg:neumann2}, AMEn, and DMRG for computing the MTTA of the case study in Section~\ref{sec:caseStudy}. The percentage of tests that ran out of memory is
	reported in the column OOM. The results are obtained by running $100$ random tests
	for each $k$, and taking average and maximum of the tests which did not encounter
	an OOM condition. This situation was never encounter for Algorithm~\ref{alg:neumann2}
	and \ref{alg:neumannt}. }
	\label{tab:memory}
\end{table}

\begin{table}
\centering 
\pgfplotstabletypeset[%
column type=c,
every head row/.style={
	before row={
		\toprule
		&\multicolumn{1}{c|}{}
		& \multicolumn{1}{c|}{Algorithm~\ref{alg:neumann2}} & 
		\multicolumn{1}{c|}{Algorithm~\ref{alg:neumannt}} & 
		\multicolumn{2}{c|}{AMEn} & 
		\multicolumn{2}{c}{DMRG} \\ 
	}, 
	after row = \midrule 
},
every last row/.style={after row=\bottomrule},		
sci zerofill,
columns={0,3,7,11,12,15,16},
columns/0/.style={dec sep align={c|},column type/.add={}{|},column name=$k$,fixed},
columns/3/.style={column name=Avg,column type/.add={}{|}},
columns/7/.style={column name=Avg,column type/.add={}{|}},
columns/11/.style={column name=Avg},
columns/12/.style={column name=OOM,column type/.add={}{|},postproc cell content/.append code={
		\pgfkeysalso{@cell content/.add={}{\%}}%
}},
columns/15/.style={column name=Avg},
columns/16/.style={column name=OOM,postproc cell content/.append code={
		\pgfkeysalso{@cell content/.add={}{\%}}%
}},
]{results.dat}
\caption{Average wall-clock time (in seconds) required to compute the MTTA for
the case study with $k$ components. OOM denotes the percentage of tests which were stopped
because they ran out of memory. Each run was allocated 20GB of RAM and 4 logical cores. The
average are computes on the tests which did not run out of memory. For Algorithm~\ref{alg:neumann2} and \ref{alg:neumannt}, this situation was never encountered.}
\label{tab:time}
\end{table}

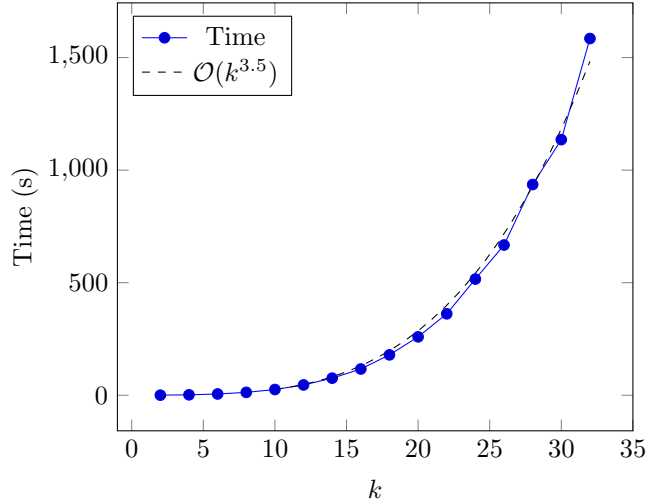
\begin{figure}	
	\centering
	\begin{tikzpicture}	
	\begin{axis}[xlabel = $k$, ylabel = Time (s), legend pos = north west]
	\addplot table[x index = 0, y index = 7] {results.dat};
	\addplot[domain = 10:32, dashed] {0.008 * sqrt(x^7)};
	\legend{Time, $\mathcal O(k^{3.5})$}
	\end{axis}
	\end{tikzpicture}
	\caption{Average timings as a function of $k$ for 
		Algorithm~\ref{alg:neumannt}. For this case study, the timings
	appear to depend on $k$ polynomially, with exponent close to $3.5$. }
	\label{fig:runtime}
\end{figure}

We have tested the values of $k = \{ 10, 12, \ldots, 32 \}$. For each value of $k$, we 
have run $100$ tests for Algorithm~\ref{alg:neumannt} and for Algorithm~\ref{alg:neumann2}, 
generating random topologies $\mathcal T$. The tests have been performed on a node of a cluster
with two Intel(R) Xeon(R) CPU E5-2650 v4 @ 2.20GHz processors each. The processes have
been limited to 20 GB of RAM and 4 threads each, with a time limit of 600 hours. 

The results for what concern  
memory usage are reported in \Cref{tab:memory}, and for runtime in \Cref{tab:time}. 
We note that, despite Algorithm~\ref{alg:neumann2} and \ref{alg:neumannt} being
equivalent (in the sense given in Section~\ref{sec:MTTA}), the quadratic convergence
of Algorithm~\ref{alg:neumann2} makes it the best choice on all the tests.

We note that Algorithm~\ref{alg:neumannt} has more predicable runtimes. In
Figure~\ref{fig:runtime}, it is visible that they appear to have a cubic dependency on
$k$ for the case study under consideration. Algorithm~\ref{alg:neumann2}, on the other
hand, has timings with a weaker correlation with $k$; from our observations, they
appear to be related to the topology, which influences the growth of the TT-ranks
during the iterations. This can be advantage, in the sense that even large scale cases
might be treatable, or a disadvantage, because it makes very hard to predict how long
the algorithm will need to give an answer. In particular, we have observed 
that for these problems the
choice of a good parameter $\gamma$ (as described in Section~\ref{sec:gamma}) is more important. In the case study under consideration, the dependency
of Algorithm~\ref{alg:neumannt} on $k$ appears to have an asymptotic
behavior close to $\mathcal O(k^{3.5})$, as reported in Figure~\ref{fig:runtime}.

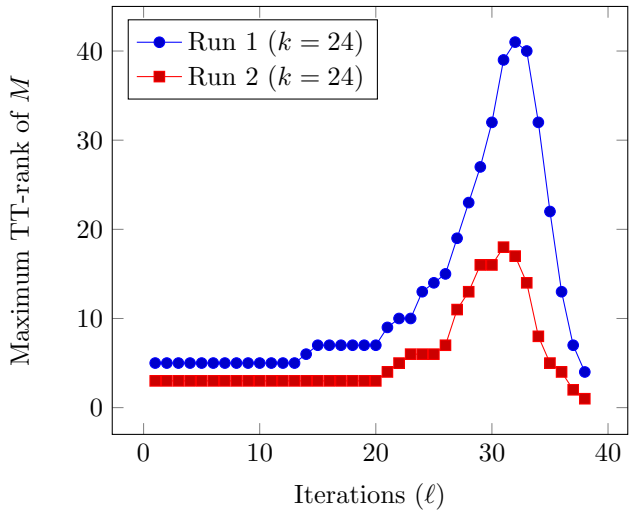
\begin{figure}	
	\centering
	\begin{tikzpicture}	
	\begin{axis}[xlabel = Iterations ($\ell$), ylabel = Maximum TT-rank of $M$, legend pos = north west]
	\addplot table[x index = 2, y index = 0] {ranks.dat};
	\addplot table[x index = 2, y index = 1] {ranks.dat};
	\legend{Run 1 ($k = 24$), Run 2 ($k = 24$)}
	\end{axis}
	\end{tikzpicture}
	\caption{Maximum TT-Rank of $M$ during the iteration of Algorithm~\ref{alg:neumann2} for 
	two runs with $k = 24$. The runs with smaller ranks took 4.55s, while the other
	needed 72.30s. }
	\label{fig:ranks}
\end{figure}

To show a typical behavior of the TT-ranks during the iteration of Algorithm~\ref{alg:neumann2}
we have reported two runs for $k = 24$, whose evolution of the maximum TT-rank of $M$
is reported in Figure~\ref{fig:ranks}. The two examples have been chosen one below and the 
other above the average runtime for this value of $k$. It is visible how the slowest 
of the two runs reaches a higher rank ($40$) than the other (which only gets up to $18$). 

We compared the results with the AMEn solver \cite{dolgov2012fast} and the
DMRG algorithm \cite{oseledets2012solution}, both available in the \texttt{TT-Toolbox} \cite{tttoolbox}. The AMEn solver has been proved to 
be quite effective for the computation of the steady-state vector of irreducible 
Markov chains in \cite{kressner2014low}. The solver can be used to compute 
$(Q - S)^{-1} v$, or to solve the normal equations $(Q - S)^T (Q - S) x = (Q - S)^T v$. 
The former problem is better conditioned, but the latter is symmetric positive definite, 
which guarantees convergence for the AMEn iteration. We have compared both choices, 
and we found that for this case study the second performs slightly better. However, 
the method stagnates on an increasing number of cases when $k > 10$, so we could only make a direct comparison in Table~\ref{tab:memory} and \ref{tab:time} for small values of $k$. DMRG, on the other hand, performed 
more favorably on the case study, and we have been able to solve problems (quite) 
reliably for $k$ 
up to $18$.

\section{Conclusions}
\label{sec:conclusions}

We have shown that tensor trains are a powerful tool for the analysis of 
performance and reliability measures (in this case, the mean time to
failure) of large systems, when the interconnection between the smaller
subsystems that compose them is sufficiently weak. 

We have presented a theoretical analysis of an iteration that is easily
applicable in the tensorized format, and with guaranteed convergence. 
A quadratically convergent variation has been shown as well, and the
performances have been tested on a representative set of examples. 

Several lines of research remain open: the connection between the
weak connections and the TT-rank in the iteration needs to be studied
further, in order to understand the relation more in depth. Moreover, 
several more measures are of interest in this context, and the application
of tensor techniques for this task could lead to faster and reliable
methods for their computation. 

We have introduced some techniques for accelerating the operations in tensor
formats for the iterations that arise from the Markovian context, and that will
be subject of future study. 

\section*{References}

\bibliographystyle{elsarticle-harv}
\bibliography{mttf}	

\end{document}